\documentclass[a4paper,fleqn, reqno]{amsart}
\usepackage[english]{babel}
\usepackage{graphicx}
\usepackage{xspace}
\usepackage{hyperref}
\usepackage{geometry}
\usepackage{color}
\usepackage{amsmath, amssymb, amscd}
\usepackage{booktabs}
\usepackage{amsfonts}
\usepackage{amsthm}
\usepackage{latexsym}
\usepackage{wrapfig}
\usepackage{float}
\usepackage{subfig}
\usepackage{mathtools}
\newtheorem{mythe}{Theorem}[section]
\newtheorem{mylem}[mythe]{Lemma}
\newtheorem{mycon}[mythe]{Conjecture}

\newtheorem{myass}[mythe]{Assumption}

\newcommand{\tr}{\mathrm{Tr}}

\newcommand{\modu}{\ \mathrm{mod}\ }

\newcommand{\sym}{\mathrm{Sym}}

\sloppy

\newcommand{\widesim}[2][1.5]{\mathrel{\overset{#2}{\scalebox{#1}[1]{$\sim$}}}}

\begin{document}
\title{The variance of the Euler totient function}
\author{Tom van Overbeeke}
\address{Mathematisch Instituut\\ Universiteit Utrecht\\ Postbus 80.010\\ 3508 TA Utrecht, Nederland}
\email{tomvov@gmail.com}
\thanks{The author would like to thank Gunther Cornelissen for all the help writing this paper and discussing its contents. He would also like to thank Zeev Rudnick for his input and discussions concerning the subject.}

\begin{abstract}
In this paper we study the variance of the Euler totient function (normalized to $\varphi(n)/n$) in the integers $\mathbb{Z}$ and in the polynomial ring $\mathbb{F}_q[T]$ over a finite field $\mathbb{F}_q$. It turns out that in $\mathbb{Z}$, under some assumptions, the variance of the normalized Euler function becomes constant. This is supported by several numerical simulations. Surprisingly, in $\mathbb{F}_q[T]$, $q\rightarrow \infty$, the analogue does not hold: due to a high amount of cancellation, the variance becomes inversely proportional to the size of the interval.
\end{abstract}
\maketitle
\section{Introduction} \label{pa:intr}

There are many results and conjectures about the statistical behaviour of arithmetical functions in short intervals. A few examples are the cancellation of the M\"obius function $\sum_{n<x}\mu(x)$, discussed in any book on analytic number theory, see e.g. \cite{Apo76}, or the conjecture about the variance of the von Mangoldt function $\Lambda$, due to Goldstein and Montgomery \cite{Gol87}. Recently, Keating and Rudnick \cite{Kea16, Kea14} have developed an entirely new technique for studying such problems in polynomial rings over finite fields, using random matrix theory, and it has been applied successfully to prove the analogue of some of these conjectures (e.g., for the von Mangoldt function \cite{Kea14}, the M\"obius function \cite{Kea16} and the divisor function \cite{Rud16}). 

In this paper, we study the analogue for the variance of the Euler totient function. In section \ref{ch:Z}, we review some known results, mainly due to Chowla \cite{Cho32} and Montgomery \cite{Mon87}. The average of the normalized function $\varphi(n)/n$ is  given by
\begin{equation} \label{av1} \frac{1}{H} \sum_{X<n <X+H} \frac{\varphi(n)}{n} \sim \frac{1}{\zeta(2)}. \end{equation}  We will first propose a conjecture (\ref{con}), based on numerical work, for the variance of $\varphi(n)/n$ in short intervals, namely, 
\begin{equation} \label{c1} \frac{1}{X} \sum_{x<X} \left( \sum_{x<n <x+H} \frac{\varphi(n)}{n} - \frac{H}{\zeta(2)} \right)^2 \widesim{?} \frac{1}{6 \zeta(2)} - \frac{1}{6 \zeta(2)^2} , \end{equation}
where $H=\Theta(x^\delta)$, $0<\delta\leq 1$.
Notice that the conjectured variance doesn't depend on the length of the interval. 

 In Theorem \ref{th:Hx}, we prove a partial result in this direction for intervals of the form $[x,2x]$ (i.e., $H=x$), namely, we prove that a related limit is equal to the right hand side of Equation (\ref{c1}), so that the conjecture in this case becomes equivalent to a problem of interchanging two limits. We also study the case where $H=[x^\delta]$ for $0<\delta<1$. Again, in Theorem \ref{th:Hxdelta} we prove a formula for a related limit, albeit under the assumption of uncorrelatedness of $[x^\delta]$ and $x$ modulo integers, cf.\ \ref{ass}. 

In the second part of the paper, we study the analogue of these problems for the polynomial ring $\mathbb{F}_q[T]$. Here, we essentially use the method of Keating and Rudnick to obtain some definite results. The Euler totient function of $f \in \mathbb{F}_q[T]$ is given by $\varphi(f):=\#\left(\mathbb{F}_q[T]/(f)\right)^\times$. 
Define the norm of $f$ to be 
$||f||:=\#\left(\mathbb{F}_q[T]/(f)\right)=q^{\deg(f)},$ and denote 
$$\mathcal{M}_n:=\{f\in \mathbb{F}_q[T]\ | \ f \textrm{ monic of degree $n$}\},$$ a set of size $q^n$. For $0\leq h \leq n-2$ and $A \in \mathcal{M}_n,$ define a \emph{short interval of size $q^h$ around $A$}  by
$$I(A;h):=\{f\in \mathbb{F}_q[T] \ | \ ||f-A||\leq q^h\}.$$
In complete analogy to Equation (\ref{av1}), the average of the normalized totient function in   $\mathbb{F}_q[T]$ is 
\begin{equation} \label{av2} \frac{1}{q^n}\sum_{A\in\mathcal{M}_n}\frac{1}{q^h}\sum_{f\in I(A;h)}\frac{\varphi(f)}{||f||} = \frac{1}{\zeta_q(2)}, \end{equation}
for all $n \geq 1$, where $\zeta_q(s) = (1-q^{1-s})^{-1}$ is the zeta function of $ \mathbb{F}_q[T]$. 

In our main theorem \ref{th:TH1}, we prove that for fixed $n$ and $0\leq h\leq n-5$, the variance is given by 
\begin{equation} \label{f}  \frac{1}{q^n}\sum_{A\in\mathcal{M}_n}\left|\sum_{f\in I(A;h)}\frac{\varphi(f)}{||f||}- \frac{q^h}{\zeta_q(2)}\right|^2 
\widesim{q \rightarrow \infty} q^{-h-3}. \end{equation} 
Somewhat surprisingly, the variance is inversely proportional to the length of the interval. Thus, the result is very different from the expected value in the integers (\ref{c1}). We do not have a conceptual explanation for this, except the (unexpected?) cancellation of terms in the proof of the result.  It would be interesting to know whether this phenomenon persists for other interesting arithmetical functions in polynomial rings and more general function fields.

\section{The ring of integers}\label{ch:Z}
Let us first consider the normalized Euler totient function in the ring of integers. Many introductory textbooks in number theory prove that
$$\sum_{n \leq X}\frac{\varphi(n)}{n}\sim \frac{X}{\zeta(2)}.$$
Define the remainder term by
$$R_0(x):=\sum_{n\leq x}\frac{\varphi(n)}{n}-\frac{x}{\zeta(2)}.$$
A few things are known about this function. Defining the fractional part function\\ $\{x\}=x-[x]$ for any real $x$ and applying the known equality $\sum_{d|n}\frac{\mu(d)}{d}=\frac{\varphi(n)}{n}$, it is easy to prove that we can write this remainder term as an infinite sum:
\begin{equation}\label{eq:rem}
R_0(x)=-\sum_{n=1}^\infty \frac{\mu(n)}{n}\bigg\{\frac{x}{n}\bigg\}.
\end{equation}
The size of this remainder term, as well as the size of the related term $R(x):=\sum_{n\leq x}\varphi(n) - \frac{x^2}{2\zeta(2)}$, has been studied by several people. Montgomery \cite{Mon87} showed that 
$$R_0(x)=\frac{R(x)}{x}+O\left(\exp(-c\sqrt{\log x})\right).$$
Walfisz \cite{Wal63} improved earlier work of Mertens \cite{Mer77} by showing that $$R(x)=o\left( x(\log x)^{\frac{2}{3}}(\log\log x)^{\frac{4}{3}}\right).$$
Together this implies that
$$R_0(x)=o\left( (\log x)^{\frac{2}{3}}(\log\log x)^{\frac{4}{3}}\right).$$
Finally Montgomery \cite{Mon87} also showed that $$R_0(x)=\Omega_\pm ((\log \log x)^{\frac{1}{2}}).$$ 
As to the averages of this remainder term, it is easy to show that the continuous average tends to zero
$$\frac{1}{X}\int_1^XR_0(t)dt\sim 0,$$
while the discrete average tends to $\frac{1}{2\zeta(2)}$,
$$\frac{1}{X}\sum_{x\leq X}R_0(x)\sim \frac{1}{2\zeta(2)}.$$
The continuous mean square was first calculated by Chowla \cite{Cho32}, who showed that
$$\frac{1}{X}\int_1^XR_0(t)^2dt\sim \frac{1}{12\zeta(2)}.$$
Erd\"os and Shapiro \cite{Sha55} noted that this continuous mean square implies the discrete mean square to be given by
$$\frac{1}{X}\sum_{x\leq X}R_0(x)^2 \sim \frac{1}{12\zeta(2)}+\frac{1}{6\zeta(2)^2}.$$
Continuing on the work of Chowla, Erd\"os and Shapiro it might be possible to prove equation (\ref{c1}) directly.\\
To calculate the variance of the (normalized) Euler totient function in an interval of size $H$, we define the remainder term for an interval
$$R_0(x;H):=R_0(x+H)-R_0(x)=\sum_{x<n\leq x+H}\frac{\varphi(n)}{n}-\frac{H}{\zeta(2)}.$$
In this paper we will consider the discrete squared average of this function, both for $H=x$ and for $H=[x^\delta]$ (short intervals). We conjecture that
\begin{mycon}\label{con}
Let $H=\Theta(x^\delta)$, for some fixed $0<\delta\leq 1$, be the size of the interval. Then
$$\frac{1}{X}\sum_{x\leq X}R_0(x,H)^2\sim \frac{1}{6\zeta(2)}-\frac{1}{6\zeta(2)^2}.$$
\end{mycon}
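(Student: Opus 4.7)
The idea is to reduce the variance to the autocorrelation $\mathrm{Cov}_c(H):=\lim_{X\to\infty}\frac{1}{X}\int_0^X R_0(t)R_0(t+H)\,dt$, compute it explicitly via Fourier analysis on~(\ref{eq:rem}), and then average over $H$. Expanding the square gives $R_0(x;H)^2=R_0(x+H)^2-2R_0(x)R_0(x+H)+R_0(x)^2$. Since $R_0(t)$ is piecewise linear with slope $-1/\zeta(2)$ between integer jumps and $H\in\mathbb{Z}$, the function $R_0(t;H)$ is constant on each interval $[k,k+1)$, so the discrete and continuous squared averages coincide up to $O(1/X)$. Combined with Chowla's formula $\frac{1}{X}\int R_0(t)^2\,dt\to 1/(12\zeta(2))$ and the shift invariance $\frac{1}{X}\int R_0(t+H)^2\,dt=\frac{1}{X}\int R_0(t)^2\,dt+O(H/X)$ (valid for $H=o(X)$), the conjecture reduces, after decomposing the sum over $x$ into blocks on which $H(x)$ is constant (of length $\sim x^{1-\delta}\to\infty$ for $\delta<1$), to proving
\[
\lim_{X\to\infty}\frac{1}{X}\sum_{x\leq X}\mathrm{Cov}_c(H(x)) \;=\; \frac{1}{12\zeta(2)^2}.
\]

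To compute $\mathrm{Cov}_c(H)$ I substitute $R_0(t)=-\sum_n\frac{\mu(n)}{n}\psi(t/n)$ with $\psi(s)=\{s\}-1/2$ (equivalent to~(\ref{eq:rem}) via $\sum_n\mu(n)/n=0$), expand $\psi$ in its Fourier series, and integrate in $t$. Only the ``resonance'' quadruples $(m,n,k,l)$ with $k/m+l/n=0$ survive; parameterising them by $d=\gcd(m,n)$ and invoking $\sum_{j\neq 0}e^{2\pi iju}/j^2=2\pi^2 B_2(\{u\})$, where $B_2(u)=u^2-u+1/6$, a short Euler-product manipulation yields
\[
\mathrm{Cov}_c(H) \;=\; \frac{1}{2}\sum_{d=1}^\infty \frac{\mu(d)^2}{d^2}\,B_2\!\left(\left\{\frac{H}{d}\right\}\right)\prod_{p\,\nmid\,d}\!\left(1-\frac{2}{p^2}\right).
\]
As a sanity check at $H=0$, $B_2(0)=1/6$ together with the Euler product $\sum_d\mu(d)^2 d^{-2}\prod_{p\nmid d}(1-2/p^2)=\prod_p(1-p^{-2})=1/\zeta(2)$ recover Chowla's $\mathrm{Cov}_c(0)=1/(12\zeta(2))$.

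The elementary identity $\sum_{r=0}^{d-1}B_2(r/d)=1/(6d)$ gives the uniform $H$-average $\mathbb{E}_H[B_2(\{H/d\})]=1/(6d^2)$, after which the Euler product $\sum_d\mu(d)^2 d^{-4}\prod_{p\nmid d}(1-2/p^2)=\prod_p(1-p^{-2})^2=1/\zeta(2)^2$ assembles into $\mathbb{E}_H[\mathrm{Cov}_c(H)]=1/(12\zeta(2)^2)$, precisely the target. Assumption~\ref{ass} --- uniform distribution of $\{H(x)/d\}$ on $\{0,1/d,\ldots,(d-1)/d\}$ as $x$ varies, simultaneously across $d$ --- is exactly what is needed to replace the $x$-average of $\mathrm{Cov}_c(H(x))$ by this uniform $H$-average, completing the reduction.

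\textbf{Main obstacle.} The delicate step is justifying the interchange of the $x$-average with the infinite sum over $d$ inside $\mathrm{Cov}_c(H(x))$. One needs (i) a uniform tail bound of the form $\sum_{d>D}\mu(d)^2 d^{-2}\lvert B_2(\{H(x)/d\})\rvert\prod_{p\nmid d}(1-2/p^2)=o_D(1)$ valid for every $x$, and (ii) \emph{quantitative} equidistribution of $\{H(x)/d\}$ in the critical range $d$ comparable to $H(x)$, where the fractional part is neither negligibly small nor manifestly equidistributed. This is precisely the interchange-of-limits obstruction highlighted in the discussion of Theorems~\ref{th:Hx} and~\ref{th:Hxdelta}.
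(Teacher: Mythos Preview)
This statement is Conjecture~\ref{con} in the paper, and the paper does \emph{not} prove it; it only establishes the weaker Theorems~\ref{th:Hx} and~\ref{th:Hxdelta}, in which the infinite sum over $m,n$ and the average over $x$ are taken in the opposite order. Your proposal is therefore not competing with a proof but with a heuristic derivation, and in that capacity it is both correct and genuinely different from the paper's route.

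The paper expands $R_0$ via (\ref{eq:rem}) and reduces everything to the elementary sums $\sum_{k=1}^{mn}\{k/m\}\{k/n\}$ of Lemma~\ref{le:sumymn}, then assembles the answer through Chowla's identity $\sum_{m,n}\mu(m)\mu(n)\gcd(m,n)^2/(m^2n^2)=1/\zeta(2)$ together with the parity splits of Lemmas~\ref{le:sumneo}--\ref{le:summneo}. Your approach is more structural: passing to the continuous autocorrelation, Fourier-expanding $\psi$, and extracting the resonance terms yields the closed formula
\[
\mathrm{Cov}_c(H)=\tfrac12\sum_{d\ge 1}\frac{\mu(d)^2}{d^2}\,B_2\!\big(\{H/d\}\big)\prod_{p\nmid d}\big(1-2p^{-2}\big),
\]
which the paper never writes down. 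The Bernoulli multiplication identity $\sum_{r=0}^{d-1}B_2(r/d)=1/(6d)$ then gives the $H$-average in one line, and Assumption~\ref{ass} enters only to equidistribute $[x^\delta]\bmod d$. This is cleaner than the paper's case-by-case parity analysis and, as a bonus, your formula immediately explains the even/odd dichotomy of Section~\ref{pa:evod}: the $d=2$ term contributes $B_2(0)=1/6$ or $B_2(1/2)=-1/12$ according to the parity of $H$.

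The one point where your writeup overstates what has been shown is the block-decomposition step. You write that on each block where $H(x)=H_j$ is constant the contribution is $\mathrm{Cov}_c(H_j)$, but $\mathrm{Cov}_c$ is a \emph{limit} as $X\to\infty$, whereas the block has length only $\asymp x^{1-\delta}$. Replacing the finite block average by $\mathrm{Cov}_c(H_j)$ is exactly the interchange of the $x$-limit with the infinite $d$-sum that you flag in your ``Main obstacle'' paragraph; it is not an innocent preliminary reduction. So your proof plan and the paper's Theorems~\ref{th:Hx}--\ref{th:Hxdelta} are on equal footing: both reach the value $\tfrac{1}{6\zeta(2)}-\tfrac{1}{6\zeta(2)^2}$ by a computation that is rigorous once the order of limits is granted, and both stop at the same unresolved interchange.
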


\begin{figure}
    \centering
    \includegraphics[width=1\textwidth]{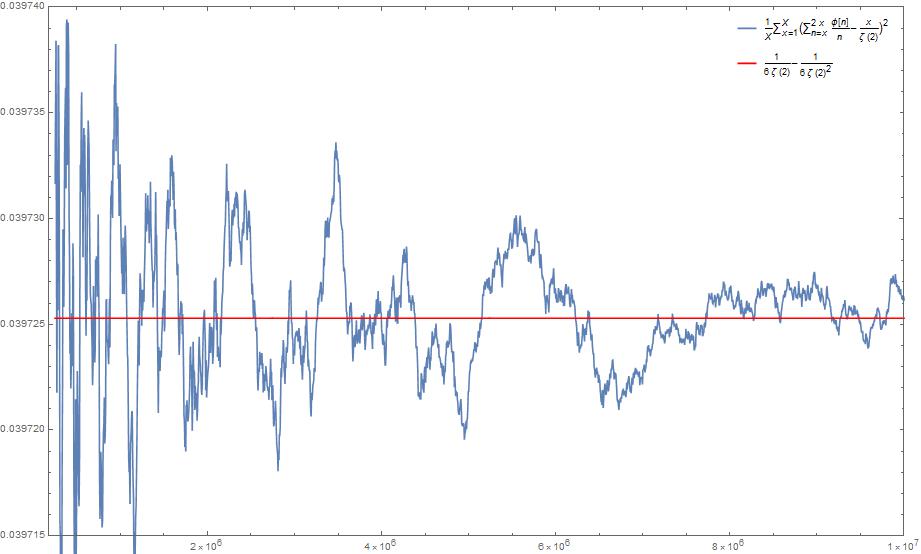}
    \caption{$\frac{1}{X}\sum_{x=1}^X\left(\sum_{n=x+1}^{2x}\frac{\varphi(n)}{n}-\frac{x}{\zeta(2)}\right)^2$ for $2.5 \cdot 10^5\leq X \leq 10^7$.}
    \label{fig:varsum}
\end{figure}

Substituting equation (\ref{eq:rem}) into $\frac{1}{X}\sum_{x\leq X}R_0(x,H)^2$, we see that we are interested in the expression
$$\lim_{X\rightarrow \infty}\frac{1}{X}\sum_{x\leq X}\left(\sum_{n=1}^\infty \frac{\mu(n)}{n}\bigg\{\frac{x+H}{n}\bigg\}-\sum_{n=1}^\infty \frac{\mu(n)}{n}\bigg\{\frac{x}{n}\bigg\}\right)^2$$
$$=\lim_{X\rightarrow \infty}\frac{1}{X}\sum_{x=1}^X\sum_{m,n=1}^\infty \frac{\mu(m)\mu(n)}{mn}\left(\bigg\{\frac{x}{n}\bigg|\frac{x}{m}\bigg\}-\bigg\{\frac{x+H}{n}\bigg|\frac{x}{m}\bigg\}-\bigg\{\frac{x}{n}\bigg|\frac{x+H}{m}\bigg\}+\bigg\{\frac{x+H}{n}\bigg|\frac{x+H}{m}\bigg\}\right),$$
where we have introduced the notation $\{\frac{x}{n}|\frac{y}{m}\}:=\{\frac{x}{n}\}\{\frac{y}{m}\}$ to shorten our expressions a bit.
In this paper we prove two partial results to showing that this expression equals $\frac{1}{6\zeta(2)}-\frac{1}{6\zeta(2)^2}$. The first one is for the case $H=x$.
\begin{mythe}\label{th:Hx}
$$
\sum_{m,n=1}^\infty\lim_{X\rightarrow \infty}\frac{1}{X}\sum_{x=1}^X \frac{\mu(m)\mu(n)}{mn}\left(\bigg\{\frac{x}{n}\bigg|\frac{x}{m}\bigg\}-\bigg\{\frac{2x}{n}\bigg|\frac{x}{m}\bigg\}-\bigg\{\frac{x}{n}\bigg|\frac{2x}{m}\bigg\}+\bigg\{\frac{2x}{n}\bigg|\frac{2x}{m}\bigg\}\right)=\frac{1}{6\zeta(2)}-\frac{1}{6\zeta(2)^2}
$$
\end{mythe}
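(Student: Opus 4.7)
My plan has three parts. First, observe that the four-fold summand factors as
$$\{x/n\}\{x/m\}-\{2x/n\}\{x/m\}-\{x/n\}\{2x/m\}+\{2x/n\}\{2x/m\}=g_n(x)\,g_m(x),$$
where $g_n(x):=\{x/n\}-\{2x/n\}$ is bounded by $1/2$ in absolute value and is periodic in $x$ with period $n$. Consequently, for each fixed pair $(m,n)$ the inner limit $\lim_{X\to\infty} X^{-1}\sum_{x=1}^X g_n(x)g_m(x)$ exists and equals the period average
$$S_{n,m}:=\frac{1}{\lcm(n,m)}\sum_{x=0}^{\lcm(n,m)-1}g_n(x)\,g_m(x).$$
The theorem is therefore equivalent to the identity $\sum_{n,m\geq 1}\mu(n)\mu(m)S_{n,m}/(nm)=\tfrac{1}{6\zeta(2)}-\tfrac{1}{6\zeta(2)^2}$.

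Second, to evaluate $S_{n,m}$ I would set $d=\gcd(n,m)$, $n=dn'$, $m=dm'$ with $\gcd(n',m')=1$, and parametrize residues of $x$ modulo $\lcm(n,m)=dn'm'$ using the Chinese Remainder Theorem. This yields
$$S_{n,m}=\frac{1}{dn'm'}\sum_{t=0}^{d-1}G_{n'}(t/d)\,G_{m'}(t/d),\qquad G_k(\tau):=\sum_{a=0}^{k-1}g\!\left(\frac{a+\tau}{k}\right),$$
where $g\colon[0,1)\to\mathbb{R}$ is given by $g(u)=-u$ on $[0,\tfrac12)$ and $g(u)=1-u$ on $[\tfrac12,1)$. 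A direct case analysis reveals two simplifications: $G_k(\tau)=\tfrac12-\tau$ for $k$ even, and $G_k(\tau)=g(\tau)$ for $k$ odd (a self-similarity of $g$ under averaging over $k$ translates). Since $\gcd(n',m')=1$ precludes both $n'$ and $m'$ from being even, $S_{n,m}$ reduces to an explicit rational function of $d$ that depends only on the parities of $n'$ and $m'$.

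Third, sum over $(n,m)$. The M\"obius weights restrict to squarefree $n$ and $m$, equivalently to pairwise-coprime squarefree triples $(d,n',m')$, and then $\mu(n)\mu(m)=\mu(d)^2\mu(n')\mu(m')=\mu(n')\mu(m')$. The inner sum over $(n',m')$ (under the coprimality and parity constraints) factors as an Euler product
$$E(d):=\prod_{\substack{p\text{ odd}\\p\nmid d}}\bigl(1-2/p^2\bigr).$$
Using the identity $(1-2/p^2)(p^2-1)/(p^2-2)=(1-1/p^2)$ together with $\prod_{p}(1-1/p^2)=1/\zeta(2)$, the weighted sums $\sum_{d\ \mathrm{sqfree}}E(d)/d^s$ for $s=2$ and $s=4$ collapse into explicit multiples of $1/\zeta(2)$ and $1/\zeta(2)^2$ respectively. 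Splitting the outer sum over $d$ into its odd and even pieces (with $d=2d_1$ in the latter, where $E(2d_1)=E(d_1)$) and assembling with the Step~2 coefficients produces exactly $\tfrac{1}{6\zeta(2)}-\tfrac{1}{6\zeta(2)^2}$.

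The main obstacle is the layered bookkeeping: the parity split on $n'$ and $m'$ in Step~2 (forced by the jump of $g$ at $\tfrac12$) interacts with the parity of $d$ and with the distinguished role of the prime $2$ in Step~3, and each individual manipulation is elementary but combining them into one identity takes patience. A conceptually cleaner variant would decompose $S_{n,m}$ as a continuous average $\lcm(n,m)^{-1}\int_0^{\lcm(n,m)}g(t/n)g(t/m)\,dt$ (which by Parseval mirrors Chowla's $\tfrac{1}{12\zeta(2)}$-calculation and supplies the $\tfrac{1}{6\zeta(2)}$ term) plus a discrete correction from the half-integer jumps of $g$ (supplying $-\tfrac{1}{6\zeta(2)^2}$); the two routes lead to the same final assembly, and absolute convergence of the outer sum follows in either case from the bound $|S_{n,m}|\leq 1/(4n'm')$, which gives $\sum|\mu(n)\mu(m)S_{n,m}|/(nm)\leq \zeta(2)^3/4$.
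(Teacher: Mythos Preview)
Your argument is correct and, while it shares the paper's overall architecture (compute the inner period average, split by parity, evaluate the resulting M\"obius sums via Euler products), it is organized differently in two respects that are worth recording.

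First, you observe up front that the four bracket products factor as $g_n(x)g_m(x)$ with $g_n(x)=\{x/n\}-\{2x/n\}=g(\{x/n\})$. The paper instead evaluates the period average of each of the four products $\{ax/m\}\{bx/n\}$ ($a,b\in\{1,2\}$) separately in Lemma~\ref{le:sumymn}, obtaining closed forms $\tfrac{(m-1)(n-1)}{4}+c\cdot(\gcd(m,n)^2-1)$ that depend on the parities of $m,n$, and only then combines them. Your factorization is cleaner and explains a priori why the combined answer depends only on $d=\gcd(m,n)$ and the parities of $n'=n/d$, $m'=m/d$; the self-similarity $G_k(\tau)=g(\tau)$ for $k$ odd is a nice structural observation that has no direct analogue in the paper.

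Second, for the outer M\"obius sum you reparametrize by pairwise-coprime squarefree triples $(d,n',m')$ and evaluate the Euler product $E(d)=\prod_{p\nmid 2d}(1-2/p^2)$ by hand, deriving the identities $\sum_{d}E(d)/d^2\in\mathbb{Q}\cdot\zeta(2)^{-1}$ and $\sum_d E(d)/d^4\in\mathbb{Q}\cdot\zeta(2)^{-2}$. The paper instead cites Chowla's identity $\sum_{m,n}\mu(m)\mu(n)\gcd(m,n)^2/(m^2n^2)=1/\zeta(2)$ (Lemma~\ref{le:sumgcd}) as a black box and records its parity decompositions in Lemmas~\ref{le:sumneo} and~\ref{le:summneo}; your Euler-product computation is exactly the standard proof of Chowla's lemma, so the content is the same, only made explicit. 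What you gain is a self-contained argument with a visible mechanism for the two constants $1/\zeta(2)$ and $1/\zeta(2)^2$; what the paper gains is modularity (Lemma~\ref{le:sumymn} is reusable for the later $H=[x^\delta]$ and $H=2[x^\delta]$, $2[x^\delta]+1$ variants). Your absolute-convergence bound $|S_{n,m}|\le 1/(4n'm')$ is correct and, incidentally, is not stated in the paper.
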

The second theorem is for the case $H=[x^\delta]$, $0<\delta<1$.
\begin{mythe}\label{th:Hxdelta}
Fix $0<\delta<1$. Assuming \ref{ass}, we have
$$
\sum_{m,n=1}^\infty\lim_{X\rightarrow \infty}\frac{1}{X}\sum_{x=1}^X \frac{\mu(m)\mu(n)}{mn}\left(\bigg\{\frac{x}{n}\bigg|\frac{x}{m}\bigg\}-\bigg\{\frac{x+[x^\delta]}{n}\bigg|\frac{x}{m}\bigg\}-\bigg\{\frac{x}{n}\bigg|\frac{x+[x^\delta]}{m}\bigg\}+\bigg\{\frac{x+[x^\delta]}{n}\bigg|\frac{x+[x^\delta]}{m}\bigg\}\right)$$
$$=\frac{1}{6\zeta(2)}-\frac{1}{6\zeta(2)^2}
$$
\end{mythe}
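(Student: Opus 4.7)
The plan is to mimic the strategy of Theorem \ref{th:Hx}: reduce the computation to a bilinear average over residue classes and combine with the Erd\H{o}s--Shapiro mean square identity. First I would note that the integrand factors as $(a-b)(c-d)$ with $a = \{x/n\}$, $b = \{(x+[x^\delta])/n\}$, $c = \{x/m\}$, $d = \{(x+[x^\delta])/m\}$. Writing $f_k(x) := \{x/k\} - \{(x+[x^\delta])/k\}$ reduces the claim to evaluating
\[
\sum_{m,n=1}^\infty \frac{\mu(m)\mu(n)}{mn}\,\lim_{X\to\infty}\frac{1}{X}\sum_{x=1}^X f_n(x)\,f_m(x).
\]

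Next, for each fixed $m,n$ the product $f_n(x)f_m(x)$ depends only on the residues $(x \bmod L,\,[x^\delta] \bmod L)$ with $L := \lcm(m,n)$. The purpose of Assumption \ref{ass} is precisely to guarantee that this pair becomes jointly equidistributed on $(\mathbb{Z}/L\mathbb{Z})^2$ as $X \to \infty$, so that the time average collapses to the space average
\[
A(m,n) := \frac{1}{L^2}\sum_{a,b=0}^{L-1}\bigl(\{a/n\}-\{(a+b)/n\}\bigr)\bigl(\{a/m\}-\{(a+b)/m\}\bigr).
\]
Expanding and performing the change of variable $c \equiv a+b \pmod L$ for each fixed $a$ (which redistributes $c$ uniformly on $\mathbb{Z}/L\mathbb{Z}$), I would rewrite
\[
A(m,n) = 2\,T(m,n) - 2\,S_m S_n,
\]
where $S_k := \frac{1}{L}\sum_{j=0}^{L-1}\{j/k\} = \frac{k-1}{2k}$ and $T(m,n) := \frac{1}{L}\sum_{j=0}^{L-1}\{j/n\}\{j/m\}$. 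A useful structural observation is that when $\gcd(m,n)=1$, CRT factors the sum and gives $T(m,n) = S_m S_n$, so $A(m,n) = 0$; only pairs sharing a common prime factor contribute.

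Finally, the M\"obius-weighted sum of $A(m,n)$ would be evaluated by identifying its two pieces with standard quantities. From
\[
\sum_n \frac{\mu(n)}{n}\,S_n = \frac{1}{2}\sum_n \frac{\mu(n)}{n} - \frac{1}{2}\sum_n \frac{\mu(n)}{n^2} = -\frac{1}{2\zeta(2)},
\]
the $-2S_m S_n$ piece contributes $-2\bigl(\tfrac{1}{2\zeta(2)}\bigr)^2 = -\tfrac{1}{2\zeta(2)^2}$. The $2T(m,n)$ piece is, term by term, the expansion of the Erd\H{o}s--Shapiro discrete mean square $\tfrac{1}{X}\sum_{x\leq X}R_0(x)^2 \sim \tfrac{1}{12\zeta(2)} + \tfrac{1}{6\zeta(2)^2}$, contributing $2\bigl(\tfrac{1}{12\zeta(2)} + \tfrac{1}{6\zeta(2)^2}\bigr)$; summing produces exactly $\tfrac{1}{6\zeta(2)} - \tfrac{1}{6\zeta(2)^2}$. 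The hard part will be the exchange of $\sum_{m,n}$ with $\lim_X$: Assumption \ref{ass} supplies the inner limit for each fixed $(m,n)$, but rigorously commuting the operations requires a truncation argument exploiting both the vanishing of $A(m,n)$ on coprime pairs and the tail bound for $R_0$ due to Walfisz, in the same spirit as the proof of Theorem \ref{th:Hx}.
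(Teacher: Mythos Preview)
Your reduction via Assumption \ref{ass} to the space average $A(m,n)=2T(m,n)-2S_mS_n$ is correct and matches the paper's computation exactly: using Lemma \ref{le:sumymn}(1) one has $T(m,n)=\frac{1}{mn}\bigl(\frac{(m-1)(n-1)}{4}+\frac{\gcd(m,n)^2-1}{12}\bigr)$ and $S_mS_n=\frac{(m-1)(n-1)}{4mn}$, so $A(m,n)=\frac{\gcd(m,n)^2-1}{6mn}$. However, two points in your write-up are off.

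First, your evaluation of the $2T(m,n)$ piece by appeal to Erd\H{o}s--Shapiro is circular. Their result concerns $\lim_X\frac{1}{X}\sum_{x\le X}R_0(x)^2$, i.e.\ the \emph{un-interchanged} expression; nothing in their theorem tells you that the term-by-term sum $\sum_{m,n}\frac{\mu(m)\mu(n)}{mn}T(m,n)$ equals the same value. That equality is precisely the kind of interchange the paper explicitly leaves open. The paper avoids this entirely by not splitting $A(m,n)$: it keeps the combination, observes that the $(m-1)(n-1)/4$ contributions from the four terms cancel (the two ``diagonal'' terms via Lemma \ref{le:sumymn}(1) and the two ``cross'' terms via Lemma \ref{le:xxdelta}), and is left with the absolutely convergent sum $\sum_{m,n}\frac{\mu(m)\mu(n)}{m^2n^2}\cdot\frac{\gcd(m,n)^2-1}{6}$, which Chowla's Lemma \ref{le:sumgcd} evaluates directly. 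Your split into $2T$ and $-2S_mS_n$ breaks this absolutely convergent sum into two conditionally convergent pieces (each requiring $\sum_n\mu(n)/n=0$), so even if you evaluate them correctly you owe a justification that the split is legitimate.

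Second, your final paragraph misreads what the theorem asserts. Theorem \ref{th:Hxdelta} is \emph{already} about the interchanged expression $\sum_{m,n}\lim_X(\cdots)$; establishing that this equals $\lim_X\sum_{m,n}(\cdots)$ is Conjecture \ref{con}, which the paper does not claim to prove. No truncation argument, no Walfisz bound, and no ``hard part'' is needed here (nor does the proof of Theorem \ref{th:Hx} contain any such argument). Once you have $A(m,n)=\frac{\gcd(m,n)^2-1}{6mn}$, summing against $\frac{\mu(m)\mu(n)}{mn}$ and invoking Lemma \ref{le:sumgcd} finishes the proof in one line.
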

We prove these theorems in sections \ref{pa:Hx} and \ref{pa:Hxdelta} respectively. Note that the expressions in these theorems are the same as the one we are interested in, but with the infinite summation over $m,n$ and the summation of $X$ interchanged. It is not clear at all that interchanging these summations does not change the value of this expression. We have not been able to prove this, due to the lack of absolute convergence of these summations. Numerical simulations however suggest that this is indeed the case. In figure \ref{fig:varsum} the variance is given up to $X=10^7$ for $H=x$. In figure \ref{fig:varsumdelta} the variance is shown for $H=[x^\delta]$, $\delta=\frac{1}{2}$, $\delta=\frac{5}{6}$.\\
\begin{figure}
    \centering
    \includegraphics[width=1\textwidth]{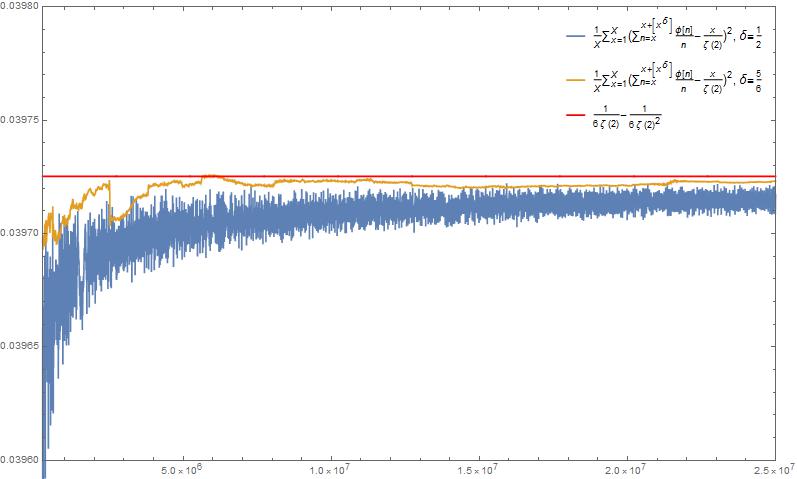}
    \caption{$\frac{1}{X}\sum_{x=1}^X\left(\sum_{n=x}^{x+2[x^\delta]}\frac{\varphi(n)}{n}-\frac{x}{\zeta(2)}\right)^2$, where $X$ ranges from 0 to $2.5 \cdot 10^7$ and $\delta=\frac{1}{2},\frac{5}{6}$; together with the predicted value for the variance.}
    \label{fig:varsumdelta}
\end{figure}
\\
Finally note that Erd\"os' and Shapiro's result for the mean square suggests that the discrete variance equals $\frac{1}{6\zeta(2)}+\frac{1}{3\zeta(2)^2}$ \emph{if} $G(x+H)$ and $G(x)$ were independent. Conjecture \ref{con} implies that this is not the case. That is, $G(x+H)$ and $G(x)$ are not independent. This is proven in section \ref{pa:evod}, as an application of Assumption \ref{ass}. We calculate the expression, again with the infinite summations interchanged, for $H=2[x^\delta]$ and $H=2[x^\delta]+1$ and show that this tends to $\frac{1}{6\zeta(2)}-\frac{2}{9\zeta(2)^2}$ and $\frac{1}{6\zeta(2)}-\frac{1}{9\zeta(2)^2}$ in the respective cases. Numerical simulations suggest the same distinction, as shown in figure \ref{fig:varsum2delta} for $\delta=\frac{1}{4}, \frac{1}{2},\frac{5}{6}$.
\begin{figure}
    \centering
    \includegraphics[width=1\textwidth]{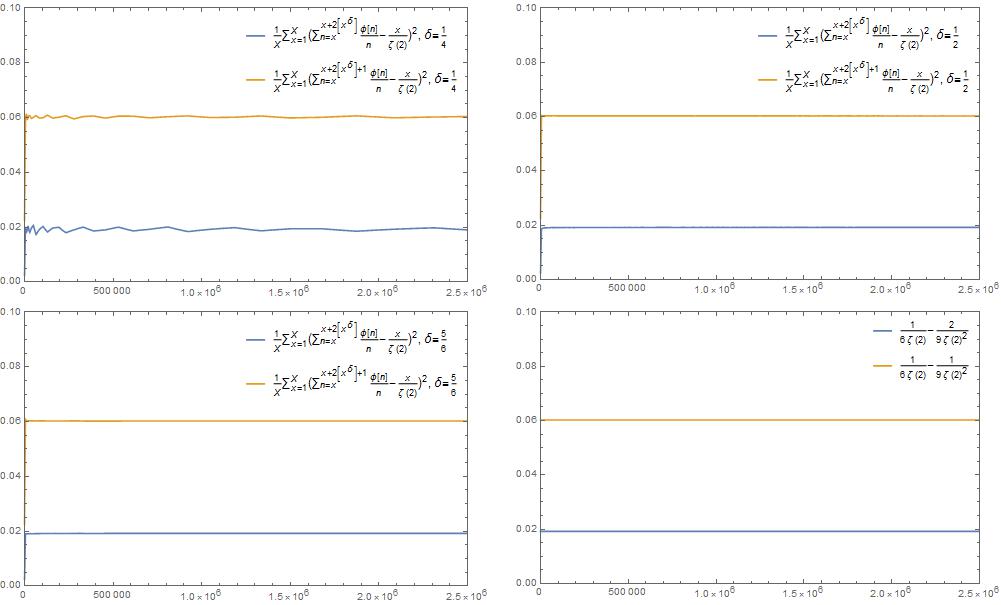}
    \caption{\ \\$\frac{1}{X}\sum_{x=1}^X\left(\sum_{n=x}^{x+2[x^\delta]}\frac{\varphi(n)}{n}-\frac{x}{\zeta(2)}\right)^2$ and $\frac{1}{X}\sum_{x=1}^X\left(\sum_{n=x}^{x+2[x^\delta]+1}\frac{\varphi(n)}{n}-\frac{x}{\zeta(2)}\right)^2$, where $X$ ranges from 0 to $2.5 \cdot 10^6$ and $\delta=\frac{1}{4},\frac{1}{2},\frac{5}{6}$; together with the predicted values for the respective variance.}
    \label{fig:varsum2delta}
\end{figure}

\section{Proof of Theorem \ref{th:Hx}}\label{pa:Hx}
In this section we prove Theorem \ref{th:Hx}. To do so we need to prove a couple of lemmas. We first give the proof of the theorem to give an idea how we will apply these lemmas.

\begin{mythe}\label{th:varsumphi}[Restatement of Theorem \ref{th:Hx}.]
$$
\sum_{m,n=1}^\infty\lim_{X\rightarrow \infty}\frac{1}{X}\sum_{x=1}^X \frac{\mu(m)\mu(n)}{mn}\left(\bigg\{\frac{x}{n}\bigg|\frac{x}{m}\bigg\}-\bigg\{\frac{2x}{n}\bigg|\frac{x}{m}\bigg\}-\bigg\{\frac{x}{n}\bigg|\frac{2x}{m}\bigg\}+\bigg\{\frac{2x}{n}\bigg|\frac{2x}{m}\bigg\}\right)=\frac{1}{6\zeta(2)}-\frac{1}{6\zeta(2)^2}
$$
\end{mythe}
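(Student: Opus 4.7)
The plan is to first observe the factorisation
\[
\{x/n\}\{x/m\}-\{2x/n\}\{x/m\}-\{x/n\}\{2x/m\}+\{2x/n\}\{2x/m\} \;=\; h_n(x)\,h_m(x),
\]
where $h_k(x):=\{x/k\}-\{2x/k\}$. Since $h_k$ depends only on $x\bmod k$, the product is periodic of period $L=\lcm(m,n)$ and the inner Ces\`aro limit evaluates to the finite average
\[
g(m,n) \;:=\; \frac{1}{L}\sum_{x=0}^{L-1}h_n(x)\,h_m(x).
\]
A short case analysis yields the piecewise formula $h_k(y)=-y/k$ on $[0,k/2)$, $h_k(y)=1-y/k$ on $(k/2,k)$, and $h_k(k/2)=1/2$ when $k$ is even. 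Theorem \ref{th:Hx} is thereby reduced to the series identity
\[
\sum_{m,n=1}^\infty \frac{\mu(m)\mu(n)}{mn}\,g(m,n) \;=\; \frac{1}{6\zeta(2)}-\frac{1}{6\zeta(2)^2}.
\]

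Next I would compute $g(m,n)$ via the Chinese Remainder Theorem. Writing $d=\gcd(m,n)$ and $m=da$, $n=db$ (so $d,a,b$ are pairwise coprime and squarefree on the $\mu$-support), the CRT bijection $\mathbb{Z}/L\cong\mathbb{Z}/d\times\mathbb{Z}/a\times\mathbb{Z}/b$ gives
\[
g(da,db) \;=\; \frac{1}{d\,a\,b}\sum_{u=0}^{d-1}S_a(u)\,S_b(u), \qquad S_c(u) \;:=\; \sum_{k=0}^{c-1}h_{dc}(u+kd).
\]
An elementary computation shows that these partial sums collapse: $S_c(u)=h_d(u)$ whenever $dc$ is odd or $d$ is even, while $S_c(u)=\tfrac12-u/d$ when $d$ is odd and $c$ is even. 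Combined with the identity $\sum_{u=0}^{d-1}u\,h_d(u)=(d^2-1)/24$ for odd $d$, this yields the closed form
\[
g(da,db) \;=\;\begin{cases} g(d,d)/(ab) & \text{if $a,b$ are both odd,}\\ -\,g(d,d)/(2ab) & \text{if $d$ is odd and exactly one of $a,b$ is even,}\end{cases}
\]
where $g(d,d)=(d^2-1)/(12d^2)$ for $d$ odd and $g(d,d)=(d^2+2)/(12d^2)$ for $d$ even (and both $a,b$ even cannot occur since $\gcd(a,b)=1$). The case $d=1$ forces $g(m,n)=0$ on the $\mu$-support (a coprime squarefree pair must contain an odd member), so only $d\ge 2$ contributes.

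In the last step I would substitute and sum. The inner sum $\sum\mu(a)\mu(b)/(a^2b^2)$ over squarefree coprime pairs with $\gcd(ab,N)=1$ factors as the Euler product $\prod_{p\nmid N}(1-2/p^2)$, the coprimality of $a,b$ killing the local diagonal. Separating on the parity of $d$ and combining the two cases above, the whole series collapses to
\[
T \;=\; \frac{W_0}{32}\sum_{d\text{ odd squarefree}}\frac{4d^2-3}{d^4\prod_{p\mid d}(1-2/p^2)}, \qquad W_0:=\prod_{p\neq 2}\bigl(1-2/p^2\bigr),
\]
and splitting the numerator $4d^2-3$ into two multiplicative pieces gives, after multiplying by $W_0$, the Euler products $\prod_{p\ne 2}(1-1/p^2)=4/(3\zeta(2))$ and $\prod_{p\ne 2}(1-1/p^2)^2=16/(9\zeta(2)^2)$, which assemble into $T=\tfrac16(1/\zeta(2)-1/\zeta(2)^2)$ as required. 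The main obstacle I anticipate is the combinatorial bookkeeping in the second step: establishing the partial-sum collapse $S_c(u)\to h_d(u)$ uniformly across the parity sub-cases and tracking the boundary values of $u$ (notably $u=0$ and $u=d/2$) correctly, so that the delicate factor $4d^2-3$ --- on which the whole cancellation hinges --- appears without sign errors.
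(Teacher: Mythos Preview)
Your approach is correct but takes a genuinely different route from the paper's. The key new idea you introduce is the factorisation
\[
\Bigl\{\tfrac{x}{n}\Bigm|\tfrac{x}{m}\Bigr\}-\Bigl\{\tfrac{2x}{n}\Bigm|\tfrac{x}{m}\Bigr\}-\Bigl\{\tfrac{x}{n}\Bigm|\tfrac{2x}{m}\Bigr\}+\Bigl\{\tfrac{2x}{n}\Bigm|\tfrac{2x}{m}\Bigr\}=h_m(x)\,h_n(x),
\]
which the paper never observes. Instead, the paper evaluates the period average of each of the four products $\{ax/n\mid bx/m\}$ separately (Lemma~\ref{le:sumymn}), obtaining closed forms of the shape $\tfrac{(m-1)(n-1)}{4}+\tfrac{\gcd(m,n)^2-c}{12}$, lets the $\tfrac{(m-1)(n-1)}{4}$ pieces cancel, and arrives at the parity-split expression~\eqref{eq:odev}. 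The evaluation of~\eqref{eq:odev} is then carried out by quoting Chowla's identity $\sum_{m,n}\mu(m)\mu(n)\gcd(m,n)^2/(m^2n^2)=1/\zeta(2)$ (Lemma~\ref{le:sumgcd}) together with its parity-restricted variants (Lemmas~\ref{le:sumneo} and~\ref{le:summneo}). Your route --- parametrise by $d=\gcd(m,n)$, establish the collapse $S_c(u)=h_d(u)$ (or $\tfrac12-u/d$) via CRT, and sum the resulting series over odd~$d$ as an explicit Euler product --- is more self-contained: it effectively re-derives Chowla's lemma rather than citing it, and the initial factorisation explains \emph{a priori} why the $(m-1)(n-1)/4$ terms vanish. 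The price you pay is the parity bookkeeping on the triple $(d,a,b)$ and the verification of the $S_c$ collapse, which you rightly flag as the delicate point; the paper trades this for the direct parity split on $(m,n)$ and a black-box appeal to Chowla. One small expository wrinkle: you remark that ``only $d\ge2$ contributes'', but your final displayed sum runs over all odd squarefree~$d$ including $d=1$; this is consistent because that sum has already absorbed the even-$d$ terms (the $d=1$ summand there carries the original $d=2$ contribution), so you may want to rename the summation variable to avoid confusion.
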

\begin{proof}
Note that if $\sum_{k=1}^{mn} f(x+k,m,n)=g(m,n)$ for all $x\in \mathbb{Z}_{\geq 0}$ for some functions $f,g$, then $\lim_{X\rightarrow\infty}\frac{1}{X}\sum_{x=1}^X f(x,m,n)=\frac{1}{mn}g(m,n)$. 
We apply this several times for $f(x,m,n)$ equal to $\{\frac{x}{n}|\frac{x}{m}\}$, $\{\frac{2x}{n}|\frac{x}{m}\}$, $\{\frac{x}{n}|\frac{2x}{m}\}$, $\{\frac{2x}{n}|\frac{2x}{m}\}$ respectively. Lemma \ref{le:sumymn} states the different $g(m,n)$ for these respective $f(x,m,n)$. The resulting expression, given by
\begin{align}
&\sum_{\substack{m\ \mathrm{odd},\\ n\ \mathrm{odd}}}\frac{\mu(m)\mu(n)}{m^2n^2}\left(\frac{\gcd(m,n)^2-1}{12}\right)-
\sum_{\substack{m\ \mathrm{even},\\ n\ \mathrm{odd}}}\frac{\mu(m)\mu(n)}{m^2n^2}\left(\frac{\gcd(m,n)^2-1}{24}\right) \nonumber\\
& \ \ \ \ \ \ \ \ \ -\sum_{\substack{m\ \mathrm{odd},\\ n\ \mathrm{even}}}\frac{\mu(m)\mu(n)}{m^2n^2}\left(\frac{\gcd(m,n)^2-1}{24}\right)+\sum_{\substack{m\ \mathrm{odd},\\ n\ \mathrm{odd}}}\frac{\mu(m)\mu(n)}{m^2n^2}\left(\frac{\gcd(m,n)^2+2}{12}\right), \label{eq:odev}
\end{align}
we calculate using Lemmas \ref{le:sumneo}, \ref{le:summneo}. The theorem then follows immediately.
\end{proof} 
\begin{mylem}\label{le:sumymn}
Let $m, n$ be positive integers. Fix $x\in \mathbb{Z}_{\geq 0}$. Then
\begin{enumerate}
\item
$$\sum_{k=1}^{mn}\bigg\{\frac{x+k}{m}\bigg|\frac{x+k}{n}\bigg\}=\frac{(m-1)(n-1)}{4}+\frac{\gcd(m,n)^2-1}{12}.$$
\item
$$\sum_{k=1}^{mn}\bigg\{\frac{2(x+k)}{m}\bigg|\frac{x+k}{n}\bigg\}=\left\{ \begin{array}{l l} \frac{(m-2)(n-1)}{4}+\frac{\gcd(\frac{m}{2},n)^2-1}{6} & \textrm{if $m$ is even,}\\
\frac{(m-1)(n-1)}{4}+\frac{\gcd(m,n)^2-1}{24}& \textrm{if $m$ is odd.}
\end{array}\right.
$$
\item
$$\sum_{k=1}^{mn}\bigg\{\frac{2(x+k)}{m}\bigg|\frac{2(x+k)}{n}\bigg\}
=\left\{ \begin{array}{l l} \frac{(m-2)(n-2)}{4}+\frac{\gcd(m,n)^2-4}{12}& \textrm{if $m$ is even, $n$ is even,}\\
\frac{(m-2)(n-1)}{4}+\frac{\gcd(m,n)^2-1}{12}& \textrm{if $m$ is even, $n$ is odd,}\\
\frac{(m-1)(n-2)}{4}+\frac{\gcd(m,n)^2-1}{12}& \textrm{if $m$ is odd, $n$ is even,}\\
\frac{(m-1)(n-1)}{4}+\frac{\gcd(m,n)^2-1}{12}& \textrm{if $m$ is odd, $n$ is odd.}
\end{array}\right.
$$
\end{enumerate}
\end{mylem}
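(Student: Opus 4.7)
The plan is as follows. In each of the three identities, the summand is periodic in $x+k$ with period dividing $mn$, so shifting the range $\{1,\ldots,mn\}$ by $x$ merely permutes the terms; hence I would set $x=0$ throughout and prove the three identities in that normalised form.

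For part (1), set $d=\gcd(m,n)$, $m=dm'$, $n=dn'$. By the Chinese Remainder Theorem, as $k$ ranges over $\{1,\ldots,mn\}$ the pair $(k\bmod m,\,k\bmod n)$ assumes each value $(a,b)$ with $0\le a<m$, $0\le b<n$, $a\equiv b\pmod d$ exactly $d$ times. Writing $a=qd+c$ and $b=q'd+c$ with $c\in\{0,\ldots,d-1\}$, $q\in\{0,\ldots,m'-1\}$, $q'\in\{0,\ldots,n'-1\}$, the sum factorises over $q$ and $q'$. Evaluating the two arithmetic progressions in closed form and then the resulting quadratic in $c$ via the standard identities for $\sum c$ and $\sum c^2$ yields, after algebraic simplification, the claimed closed form $(m-1)(n-1)/4+(d^2-1)/12$.

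For part (2), I would split on the parity of $m$. If $m$ is even, the identity $\{2(x+k)/m\}=\{(x+k)/(m/2)\}$ turns the sum into a part-(1) sum for the pair $(m/2,n)$ taken over $mn$ terms; since the summand has period dividing $mn/2$, the extra factor is exactly $2$, and the stated formula follows using $\gcd(m/2,n)=\gcd(m,n)$ when $n$ is odd. If $m$ is odd, then $2$ is invertible modulo $m$ (hence modulo $d$, since $d\mid m$), and the CRT parametrisation of part (1) carries over with the modified constraint $a\equiv 2b\pmod d$. The new combinatorial ingredient that appears is the sum $\sum_{c=0}^{d-1} c\cdot(2^{-1}c\bmod d)$, which I would evaluate by splitting $c$ according to parity; the discrepancy this produces relative to part (1) is exactly the $(d^2-1)/24$ correction demanded by the lemma.

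For part (3), the four parity combinations are treated separately. When both $m$ and $n$ are odd, the map $k\mapsto 2k\bmod mn$ is a bijection of $\{1,\ldots,mn\}$, so substitution reduces the sum verbatim to part (1). When exactly one of $m,n$ is even, the identity $\{2k/m\}=\{k/(m/2)\}$ on the even side produces a part-(2) sum with the odd side playing the $m$-odd role, and $\gcd(m/2,n)=\gcd(m,n)$ (valid because the other argument is odd) delivers the stated formula. When $m$ and $n$ are both even, $\{2k/m\}\{2k/n\}=\{k/(m/2)\}\{k/(n/2)\}$ reduces to a part-(1) sum for $(m/2,n/2)$ scaled by $4$ (four copies of the period $\operatorname{lcm}(m/2,n/2)$ inside $\{1,\ldots,mn\}$), and the identity $\gcd(m/2,n/2)^2=\gcd(m,n)^2/4$ matches the stated answer. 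The main obstacle is the $m$-odd case of part (2): extracting the closed form of $\sum_{c=0}^{d-1} c\cdot(2^{-1}c\bmod d)$ is the one place where a genuine parity-based dissection of the index is needed, whereas every other case is a mechanical reduction either to part (1) or to this single delicate sum.
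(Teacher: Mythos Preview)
Your proposal is correct and follows essentially the same route as the paper: reduce to $x=0$ by periodicity, handle part~(1) via the CRT description of the attained residue pairs, and treat parts~(2) and~(3) by case-splitting on parity and reducing to earlier parts (with the $m$-odd case of~(2) using the modified congruence $r_m\equiv 2r_n\pmod d$). You actually supply more detail than the paper on the delicate sum $\sum_{c=0}^{d-1} c\cdot(2^{-1}c\bmod d)$ in the $m$-odd branch of~(2), which the paper dismisses as ``analogous''; note only that your aside ``using $\gcd(m/2,n)=\gcd(m,n)$ when $n$ is odd'' in the $m$-even branch of~(2) is unnecessary, since the stated formula already carries $\gcd(m/2,n)$.
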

\begin{proof}
Note that if $x\equiv r_m \modu m$, then $\left\{\frac{x}{m}\right\}=\frac{r_m}{m}$. It follows that $\left\{\frac{x+mn}{m}\big|\frac{x+mn}{n}\right\}=\left\{\frac{x}{m}\big|\frac{x}{n}\right\}$. It is hence sufficient to prove the respective statements for $x=0$. Write $d=\gcd(m,n)$. 
\begin{enumerate}
\item Since $$\sum_{k=1}^{mn}\bigg\{\frac{k}{m}\bigg|\frac{k}{n}\bigg\}=d\sum_{k=1}^{\frac{mn}{d}}\bigg\{\frac{k}{m}\bigg|\frac{k}{n}\bigg\},$$ the crucial question is which pairs $(k\modu m, k\modu n)$ are attained when $k$ runs over the integers from 1 to $\frac{mn}{d}$. Applying some combinatorial arguments, it is easy to see that these are exactly the pairs $(r_m,r_n)$, such that $r_m\equiv r_n \modu d$. Hence
\begin{align*}
\sum_{k=1}^{mn}\bigg\{\frac{k}{m}\bigg|\frac{k}{n}\bigg\}&=d\sum_{k=1}^{\frac{mn}{d}}\bigg\{\frac{k}{m}\bigg|\frac{k}{n}\bigg\}=d\sum_{l=0}^{d-1} \left(\sum_{\substack{0\leq r_m<m\\ r_m \equiv l \modu d}}\frac{r_m}{m}\right)\left(\sum_{\substack{0\leq r_n<n\\ r_n \equiv l \modu d}}\frac{r_n}{n}\right)\\
&=\frac{(m-1)(n-1)}{4}+\frac{d^2-1}{12}.
\end{align*}
\item Note that if $m$ is even, we can apply $(1)$ to $\frac{m}{2}$. If $m$ is odd, the proof is analogous to $(1)$. It turns out that the pairs $(2k\modu m, k\modu n)$ that are attained when $k$ runs over 1 to $\frac{mn}{d}$ are exactly the pairs $(r_m,r_n)$ such that $r_m \equiv 2r_n\mod d$. The calculations are then analogous to the calculations for proving $(1)$.\\
\item For the last statement, we can apply $(1)$ to $\frac{m}{2},\frac{n}{2}$ if both $m$ and $n$ are even. If $m$ is odd, $n$ is even, we apply Lemma $(2)$ on $m,\frac{n}{2}$ and vice versa for $m$ even, $n$ odd. Finally if both $m$ and $n$ are odd, then $(2x \modu m, 2x\modu n)$ runs over the same pairs as $(x\modu m, x\modu n)$, only in a different order. We can hence apply $(1)$.
\end{enumerate}
\end{proof}
\begin{mylem}[\cite{Cho32}, Lemma 6] \label{le:sumgcd}
\begin{equation}\label{eq:gcd}
\sum_{m,n=1}^\infty\frac{\mu(m)\mu(n)}{m^2n^2}\cdot \gcd(m,n)^2=\frac{1}{\zeta(2)}.
\end{equation}
\qed
\end{mylem}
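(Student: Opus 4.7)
The plan is to expand $\gcd(m,n)^2$ by a Dirichlet convolution and decouple the two variables. Specifically, I would use the Jordan totient identity $\gcd(m,n)^2 = \sum_{d \mid \gcd(m,n)} J_2(d)$, where $J_2(d) = d^2 \prod_{p \mid d}(1 - p^{-2})$ is Jordan's totient of order $2$. Interchanging the order of summation and setting $m = da$, $n = db$ turns the double sum into
$$\sum_{d=1}^\infty \frac{J_2(d)}{d^4}\left(\sum_{a=1}^\infty \frac{\mu(da)}{a^2}\right)^{\!2}.$$

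Next, I would exploit that $\mu(da) = 0$ unless $d$ is squarefree and $\gcd(a,d) = 1$. On the squarefree part, factoring out $\mu(d)$ reduces the inner series to the standard restricted M\"obius sum, evaluated by a truncated Euler product:
$$\sum_{\gcd(a,d)=1}\frac{\mu(a)}{a^2} = \prod_{p \nmid d}(1 - p^{-2}) = \frac{1}{\zeta(2)\prod_{p\mid d}(1 - p^{-2})}.$$
Substituting back and using the identity $J_2(d)/d^2 = \prod_{p \mid d}(1 - p^{-2})$ valid on squarefree $d$ produces a clean cancellation, collapsing the expression to
$$\frac{1}{\zeta(2)^2}\sum_{d \text{ squarefree}}\frac{1}{d^2 \prod_{p\mid d}(1 - p^{-2})}.$$
This last sum is multiplicative, with Euler factor $1 + p^{-2}/(1 - p^{-2}) = (1 - p^{-2})^{-1}$ at each prime, so it equals $\zeta(2)$, yielding the claimed value $1/\zeta(2)$.

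The only real technical point is justifying the interchange of summations; this follows from absolute convergence, since writing $m = da$, $n = db$ with $\gcd(a,b) = 1$ shows that the $\mu$-weighted double series is dominated by $\zeta(2)^3$. Apart from this bookkeeping, every step is a mechanical Euler product manipulation, so I do not anticipate any further obstacle.
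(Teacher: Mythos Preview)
Your argument is correct: the Jordan-totient expansion $\gcd(m,n)^2=\sum_{d\mid m,\,d\mid n}J_2(d)$ cleanly decouples the variables, the Euler-product evaluations are accurate, and absolute convergence via the substitution $m=da$, $n=db$ justifies the rearrangement. Note that the paper does not supply its own proof of this lemma; it simply quotes the result from Chowla~\cite{Cho32}, so your write-up in fact fills in a step the paper leaves to the literature.
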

\noindent Equation (\ref{eq:odev}) suggests that we specifically need to know the values of the even and odd sums. We have
\begin{mylem}\label{le:sumneo}\ 
\begin{enumerate}
\item
$$\sum_{n\ \mathrm{even}}\frac{\mu(n)}{n^2}=-\frac{1}{3\zeta(2)}$$
\item $$\sum_{n\ \mathrm{odd}}\frac{\mu(n)}{n^2}=\frac{4}{3\zeta(2)}.$$
\end{enumerate}
\end{mylem}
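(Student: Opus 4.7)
The plan is to derive both identities from the Euler product expansion of $1/\zeta(s)$, namely
\[
\frac{1}{\zeta(s)} = \sum_{n=1}^\infty \frac{\mu(n)}{n^s} = \prod_p \left(1 - p^{-s}\right),
\]
evaluated at $s=2$. Both statements are parity-weighted variants of this classical identity, so only bookkeeping of the Euler factor at the prime $2$ is required.

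For part (2), I would remove the Euler factor at $p=2$: restricting the Dirichlet series to odd $n$ is exactly restricting the Euler product to odd primes, giving
\[
\sum_{n\text{ odd}} \frac{\mu(n)}{n^2} = \prod_{p\neq 2}\left(1-\frac{1}{p^2}\right) = \frac{\prod_p(1-p^{-2})}{1-1/4} = \frac{1/\zeta(2)}{3/4} = \frac{4}{3\zeta(2)}.
\]
For part (1), I would exploit the fact that any even squarefree integer is uniquely of the form $n=2m$ with $m$ odd (terms with $4\mid n$ give $\mu(n)=0$), and that $\mu$ is multiplicative with $\gcd(2,m)=1$, so $\mu(2m)=\mu(2)\mu(m)=-\mu(m)$. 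Substituting,
\[
\sum_{n\text{ even}} \frac{\mu(n)}{n^2} = \sum_{m\text{ odd}} \frac{-\mu(m)}{(2m)^2} = -\frac{1}{4}\sum_{m\text{ odd}} \frac{\mu(m)}{m^2} = -\frac{1}{4}\cdot\frac{4}{3\zeta(2)} = -\frac{1}{3\zeta(2)}.
\]

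There is no real obstacle: both assertions are routine manipulations of the Dirichlet series and Euler product for $\mu$, and part (1) could equivalently be obtained as $1/\zeta(2)-4/(3\zeta(2)) = -1/(3\zeta(2))$ using the full sum $\sum_n \mu(n)/n^2 = 1/\zeta(2)$ as a sanity check. The role of the lemma is purely to isolate the parity components of the Möbius-weighted sum that appear when regrouping Equation~(\ref{eq:odev}).
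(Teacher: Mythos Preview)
Your proof is correct and essentially coincides with the paper's. The paper derives the single relation $\sum_{n\text{ even}}\mu(n)/n^2 = -\tfrac14\sum_{n\text{ odd}}\mu(n)/n^2$ (exactly your computation for part~(1)) and then implicitly combines it with $\sum_n \mu(n)/n^2 = 1/\zeta(2)$; your only minor variation is to obtain the odd sum directly from the Euler product rather than by solving that linear system, which is equivalent and which you yourself flag as the alternative route in your final remark.
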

\begin{proof}
Both parts follow directly from
$$\sum_{n\ \mathrm{even}}\frac{\mu(n)}{n^2}=\sum_{n}\frac{\mu(2n)}{(2n)^2} =\sum_{n\ \mathrm{odd}}\frac{\mu(2n)}{(2n)^2}=-\frac{1}{4}\sum_{n\ \mathrm{odd}}\frac{\mu(n)}{n^2}.$$
\end{proof}
\noindent Analogously, applying Lemma \ref{le:sumgcd}, we find 
\begin{mylem}\label{le:summneo}\ 
\begin{enumerate}
\item $$\sum_{\substack{m\ \mathrm{odd},\\ n\ \mathrm{odd}}}\frac{\mu(m)\mu(n)}{m^2n^2}\gcd(m,n)^2=\frac{4}{3\zeta(2)}.$$
\item $$\sum_{\substack{m\ \mathrm{even},\\ n\ \mathrm{odd}}}\frac{\mu(m)\mu(n)}{m^2n^2}\gcd(m,n)^2=\sum_{\substack{m\ \mathrm{odd},\\ n\ \mathrm{even}}}\frac{\mu(m)\mu(n)}{m^2n^2}\gcd(m,n)^2=-\frac{1}{3\zeta(2)}.$$
\item $$\sum_{\substack{m\ \mathrm{even},\\ n\ \mathrm{even}}}\frac{\mu(m)\mu(n)}{m^2n^2}\gcd(m,n)^2=\frac{1}{3\zeta(2)}.$$
\end{enumerate}
\qed
\end{mylem}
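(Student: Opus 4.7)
The plan is to express each of the three sums in terms of the single sum
$$ S_{oo} := \sum_{\substack{m\ \mathrm{odd}\\ n\ \mathrm{odd}}} \frac{\mu(m)\mu(n)}{m^2 n^2} \gcd(m,n)^2, $$
and then pin down $S_{oo}$ using Lemma \ref{le:sumgcd}. The reductions are the natural analogues of the trick used in the proof of Lemma \ref{le:sumneo}, exploiting $\mu(2k)=-\mu(k)$ for $k$ odd and $\mu(2k)=0$ for $k$ even.

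First I would handle the mixed case $S_{eo}$ with $m$ even, $n$ odd by substituting $m=2m'$. Since $\mu(2m')$ vanishes unless $m'$ is odd, the sum restricts to $m',n$ both odd, and since $n$ is odd we have $\gcd(2m',n)=\gcd(m',n)$. Using $(2m')^2=4m'^2$ and $\mu(2m')=-\mu(m')$, one obtains $S_{eo}=-\tfrac14 S_{oo}$. By the symmetry of the summand, $S_{oe}=-\tfrac14 S_{oo}$ as well.

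Next, for $S_{ee}$ with both $m,n$ even, substitute $m=2m'$, $n=2n'$; this forces both $m',n'$ to be odd. Here $\mu(2m')\mu(2n')=\mu(m')\mu(n')$ and $\gcd(2m',2n')=2\gcd(m',n')$, so the extra factor of $4$ from the gcd cancels one of the two factors of $4$ in the denominator, giving $S_{ee}=\tfrac14 S_{oo}$.

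Finally, Lemma \ref{le:sumgcd} says $S_{oo}+S_{eo}+S_{oe}+S_{ee}=1/\zeta(2)$, which collapses to
$$ \Bigl(1-\tfrac14-\tfrac14+\tfrac14\Bigr)S_{oo}=\tfrac34 S_{oo}=\frac{1}{\zeta(2)}, $$
so $S_{oo}=4/(3\zeta(2))$, and the three stated evaluations follow immediately. There is essentially no obstacle here: the whole argument is elementary bookkeeping with the multiplicativity of $\mu$ and the identities $\gcd(2a,b)=\gcd(a,b)$ for $b$ odd and $\gcd(2a,2b)=2\gcd(a,b)$, together with the input from Lemma \ref{le:sumgcd}.
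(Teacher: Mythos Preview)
Your proposal is correct and is precisely the argument the paper has in mind: the paper only writes ``Analogously, applying Lemma~\ref{le:sumgcd}'' and leaves the details to the reader, and what you have written is exactly the analogue of the trick in Lemma~\ref{le:sumneo} carried out for the double sum, using Lemma~\ref{le:sumgcd} to pin down $S_{oo}$.
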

\section{Proof of Theorem \ref{th:Hxdelta}}\label{pa:Hxdelta}
\noindent In short intervals the calculations are more difficult. For any $x,H$, knowing the value of $\left\{\frac{x+H}{m}\right\}$ is equivalent to knowing $x+H \mod m$. For $H=[x^\delta]$ this is difficult, as the value of $[x^\delta] \modu m$ is independent to the value of $x\modu n$ for all $m,n$. This is easily seen. For example if $\delta=\frac{1}{2}$, then $y=[x^\delta]$ implies $y^2\leq x < (y+1)^2$. Since the gaps between $y^2$ and $(y+1)^2$ get larger and larger, at some point the gaps will be much bigger than $m$. Hence if we fix some large $y\in\mathbb{Z}$ and let $x\in \mathbb{Z}$ range in between $y^2$ and $(y+1)^2$, we will find every possible value for $x\modu m$ with about the same probability. This qualitative argument persuades us to make the following assumption.
\begin{myass}\label{ass}
Fix $m,n\in \mathbb{Z}_{\geq 1}$. For any $0<\delta<1$, there exists no correlation between $x\modu n$ and $[x^\delta] \modu m$. That is
$$\mathbb{P}\left([x^\delta] \equiv r_m \modu m | x\equiv r_n \modu n\right)=\frac{1}{m}$$
for any $0\leq r_m<m$, $0\leq r_n< n$.
\end{myass}
\noindent This assumption enables us to predict the average of the desired functions, even though we do not know $[x^\delta] \modu m$.
\begin{mylem}\label{le:xxdelta}
Assuming \ref{ass}, we have for any $m,n$
$$\lim_{X\rightarrow \infty} \frac{1}{X}\sum_{x=1}^X \bigg\{\frac{x}{m}\bigg|\frac{x+[x^\delta]}{n}\bigg\}=\frac{1}{mn}\left(\frac{(m-1)(n-1)}{4}\right).$$
\end{mylem}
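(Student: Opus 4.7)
The plan is to use Assumption \ref{ass} to decouple $[x^\delta] \bmod n$ from the residue of $x$ modulo any fixed integer, and then to evaluate an explicit double sum. Since $\{x/m\} = (x \bmod m)/m$ for $x \in \mathbb{Z}_{\geq 0}$, the summand rewrites as
\[
\left\{\frac{x}{m}\right\}\left\{\frac{x+[x^\delta]}{n}\right\} = \frac{(x \bmod m)\cdot((x+[x^\delta]) \bmod n)}{mn},
\]
so the task reduces to averaging a function of the two residues $x \bmod m$ and $(x+[x^\delta]) \bmod n$.

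First I would introduce the auxiliary modulus $L := \lcm(m,n)$ and apply Assumption \ref{ass} with $n$ playing the role of ``$m$'' and $L$ playing the role of ``$n$''. This yields that, as $X \to \infty$, the pair $(x \bmod L,\ [x^\delta] \bmod n)$ is equidistributed in the Ces\`aro sense on $\{0,\ldots,L-1\} \times \{0,\ldots,n-1\}$: for every bounded function $F$ on this finite product,
\[
\frac{1}{X}\sum_{x=1}^X F\bigl(x \bmod L,\ [x^\delta] \bmod n\bigr) \longrightarrow \frac{1}{Ln}\sum_{r=0}^{L-1}\sum_{s=0}^{n-1} F(r,s).
\]
The $x \bmod L$ marginal is standard equidistribution of integers in residue classes, and the conditional equidistribution of $[x^\delta] \bmod n$ given $x \bmod L$ is exactly the content of Assumption \ref{ass} with the indicated choice of moduli. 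Note that $x \bmod m$ and $x \bmod n$ are both determined by $x \bmod L$, and $(x+[x^\delta]) \bmod n$ is determined by $(x \bmod L, [x^\delta] \bmod n)$, so this is the right bookkeeping.

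Applying this to $F(r,s) = (r \bmod m)\cdot((r+s) \bmod n)$, the desired limit becomes
\[
\frac{1}{mn}\cdot \frac{1}{Ln}\sum_{r=0}^{L-1}\sum_{s=0}^{n-1}(r \bmod m)\cdot ((r+s) \bmod n).
\]
For each fixed $r$, the map $s \mapsto (r+s) \bmod n$ is a bijection of $\{0,\ldots,n-1\}$, so the inner sum equals $n(n-1)/2$ independently of $r$. Since $m \mid L$, as $r$ runs over $\{0,\ldots,L-1\}$ the residue $r \bmod m$ takes each value in $\{0,\ldots,m-1\}$ exactly $L/m$ times, so $\sum_{r=0}^{L-1}(r \bmod m) = L(m-1)/2$. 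Multiplying and collecting constants yields $(m-1)(n-1)/(4mn)$, which is the asserted value.

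The only conceptually delicate point is the upgrade from the single-pair independence as stated in Assumption \ref{ass} to the joint equidistribution of $(x \bmod L,\ [x^\delta] \bmod n)$; this is not a genuine obstacle, since it amounts to applying the assumption with $(n, L)$ in place of the original pair $(m, n)$. Once this upgrade is in hand, the rest of the argument is purely elementary arithmetic of residues.
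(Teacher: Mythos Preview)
Your proof is correct and follows essentially the same route as the paper: use Assumption~\ref{ass} to conclude that the pair $(x \bmod m,\ (x+[x^\delta]) \bmod n)$ is equidistributed, then evaluate the resulting factored sum. Your detour through $L=\lcm(m,n)$ makes rigorous a step the paper leaves implicit---namely, that independence of $[x^\delta]\bmod n$ from $x\bmod L$ (rather than merely from $x\bmod m$) is what actually forces the pair $(x\bmod m,\ (x+[x^\delta])\bmod n)$ to be uniform, since the second coordinate also depends on $x\bmod n$.
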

\begin{proof}
By Assumption \ref{ass} there is no correlation between $x \modu m$ and $[x^\delta]\modu n$. Hence any pair $(x\modu m, x+[x^\delta]\modu n)$ is attained with equal probability. Since there are $mn$ different such pairs, we conclude that the average is equal to
$$
\frac{1}{mn}\left(\sum_{r_m=0}^{m-1}\frac{r_m}{m}\right)\left(\sum_{r_n=0}^{n-1}\frac{r_n}{n}\right)=\frac{1}{mn}\left(\frac{(m-1)(n-1)}{4}\right).
$$
\end{proof}
\begin{mythe}[Restatement of Theorem \ref{th:Hxdelta}]
Fix $0<\delta<1$. Assuming \ref{ass}, we have
$$
\sum_{m,n=1}^\infty\lim_{X\rightarrow \infty}\frac{1}{X}\sum_{x=1}^X \frac{\mu(m)\mu(n)}{mn}\left(\bigg\{\frac{x}{n}\bigg|\frac{x}{m}\bigg\}-\bigg\{\frac{x+[x^\delta]}{n}\bigg|\frac{x}{m}\bigg\}-\bigg\{\frac{x}{n}\bigg|\frac{x+[x^\delta]}{m}\bigg\}+\bigg\{\frac{x+[x^\delta]}{n}\bigg|\frac{x+[x^\delta]}{m}\bigg\}\right)$$
$$=\frac{1}{6\zeta(2)}-\frac{1}{6\zeta(2)^2}
$$
\end{mythe}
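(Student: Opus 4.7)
The plan is to mimic the proof of Theorem \ref{th:Hx}: compute the Cesàro limit $\lim_{X\to\infty}\frac{1}{X}\sum_{x=1}^X f(x,m,n)$ for each of the four summands individually, then assemble them in a double $\mu$-sum which collapses via Lemma \ref{le:sumgcd}. The four summands are the pure term $\{x/m\,|\,x/n\}$, the two mixed terms $\{(x+[x^\delta])/n\,|\,x/m\}$ and $\{x/n\,|\,(x+[x^\delta])/m\}$, and the fully shifted term $\{(x+[x^\delta])/m\,|\,(x+[x^\delta])/n\}$.

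For the pure term, Lemma \ref{le:sumymn}(1) combined with the periodicity/Cesàro remark used at the start of the proof of Theorem \ref{th:Hx} gives
$$
\lim_{X\to\infty}\frac{1}{X}\sum_{x=1}^X\bigg\{\frac{x}{m}\bigg|\frac{x}{n}\bigg\}=\frac{(m-1)(n-1)}{4mn}+\frac{\gcd(m,n)^2-1}{12mn}.
$$
The two mixed terms are handled directly by Lemma \ref{le:xxdelta} (and its twin with $m,n$ swapped), each contributing $\frac{(m-1)(n-1)}{4mn}$. For the fully shifted term, I would argue that the joint distribution of $((x+[x^\delta])\bmod m,\,(x+[x^\delta])\bmod n)$ coincides with that of $(x\bmod m,\,x\bmod n)$. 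Under Assumption \ref{ass} (extended naturally to the modulus $L=\lcm(m,n)$), the residue $[x^\delta]\bmod L$ is equidistributed and independent of $x\bmod L$; shifting by an independent uniform residue modulo $L$ preserves the joint distribution modulo $m$ and $n$. Hence the fourth limit equals the first.

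Assembling with signs $+,-,-,+$, the $(m-1)(n-1)/(4mn)$ contributions cancel and what survives is
$$
\sum_{m,n\ge 1}\frac{\mu(m)\mu(n)}{mn}\cdot\frac{\gcd(m,n)^2-1}{6mn}=\frac{1}{6\zeta(2)}-\frac{1}{6\zeta(2)^2},
$$
using Lemma \ref{le:sumgcd} for the $\gcd$-piece and the Möbius identity $\sum_n\mu(n)/n^2=1/\zeta(2)$ for the constant piece. The main obstacle is the fourth term: Assumption \ref{ass} as literally stated asserts uncorrelation between $[x^\delta]\bmod m$ and $x\bmod n$ for a fixed single pair $(m,n)$, whereas the shift-invariance argument wants independence of $[x^\delta]\bmod L$ from the joint variable $(x\bmod m,\,x\bmod n)$. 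This is a mild strengthening in the same spirit, and it must be either built into or deduced from the assumption before the remaining calculation, which is a routine $\mu$-sum manipulation, goes through.
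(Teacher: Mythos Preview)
Your approach is essentially identical to the paper's: it computes each of the four Ces\`aro limits separately (Lemma \ref{le:sumymn}(1) for the pure term, Lemma \ref{le:xxdelta} for the mixed terms, and shift-invariance via Assumption \ref{ass} for the fully shifted term) and then collapses the resulting $\mu$-sum via Lemma \ref{le:sumgcd}. Your worry about needing a ``mild strengthening'' is unnecessary: Assumption \ref{ass} is stated for \emph{arbitrary} moduli, so applying it with both moduli equal to $L=\lcm(m,n)$ already gives that $[x^\delta]\bmod L$ is uniform and independent of $x\bmod L$, which is exactly what your shift-invariance argument requires.
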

\begin{proof}
The proof of this is analogous to the proof of Theorem  \ref{th:varsumphi}. Note that by Assumption \ref{ass} and Lemma \ref{le:sumymn}
\begin{align*}
\lim_{X\rightarrow \infty}\frac{1}{X}\sum_{x=1}^X \bigg\{\frac{x+[x^\delta]}{n}\bigg\}\bigg\{\frac{x+[x^\delta]}{m}\bigg\}
&=\lim_{X\rightarrow \infty}\frac{1}{X}\sum_{x=1}^X\bigg\{\frac{x}{n}\bigg\}\bigg\{\frac{x}{m}\bigg\}\\
&=\frac{1}{mn}\left(\frac{(m-1)(n-1)}{4}+\frac{\gcd(m,n)^2-1}{12}\right).
\end{align*}
\end{proof}
\section{Fixing the parity of $H$}\label{pa:evod}
As noted in the introduction the variance of $\varphi(n)/n$ seems to be much lower if we force $H$ to be even, than if we force $H$ to be odd. In this section we show that this is indeed the case by calculating the variance for $H=2[x^\delta]$ and $H=2[x^\delta]+1$, assuming \ref{ass}. 
\begin{mylem}\label{le:x2xdelta}
Assuming \ref{ass}, we have for any $m,n$
$$\lim_{X\rightarrow \infty} \frac{1}{X}\sum_{x=1}^X \bigg\{\frac{x}{m}\bigg|\frac{x+2[x^\delta]}{n}\bigg\}=\left\{ \begin{array}{l l} \frac{1}{mn}\left(\frac{(m-1)(n-1)}{4}+\frac{1}{4}\right) & \textrm{if $m$ is even, $n$ is even}\\
\frac{1}{mn}\left(\frac{(m-1)(n-1)}{4}\right)& \textrm{otherwise.}
\end{array}\right.$$
\end{mylem}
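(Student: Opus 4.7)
The plan is to refine the argument of Lemma \ref{le:xxdelta} by tracking the joint distribution of $(x\bmod m,\,x\bmod n)$, because the prefactor $2$ in $2[x^\delta]$ destroys uniformity modulo $n$ when $n$ is even. Write $d=\gcd(m,n)$ and $L=\mathrm{lcm}(m,n)=mn/d$. As $x$ varies, $(x\bmod m,\,x\bmod n)$ becomes asymptotically uniform (with weight $d/mn$) over the $L$ admissible pairs $(a,b)$ with $0\le a<m$, $0\le b<n$ and $a\equiv b\pmod d$. Applying Assumption \ref{ass} with modulus $L$ in the role of ``$n$'' and $n$ in the role of ``$m$'', the quantity $[x^\delta]\bmod n$ is asymptotically uniform on $\{0,\dots,n-1\}$ and independent of $x\bmod L$, hence of $(x\bmod m,\,x\bmod n)$. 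Consequently the limit equals
\begin{equation*}
\frac{d}{mn}\sum_{\substack{0\le a<m,\,0\le b<n\\ a\equiv b\,(\bmod\,d)}}\frac{a}{m}\cdot h(b,n),\qquad h(b,n):=\frac{1}{n}\sum_{c=0}^{n-1}\frac{(b+2c)\bmod n}{n}.
\end{equation*}

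The first step is to evaluate $h(b,n)$ by a parity analysis. If $n$ is odd, $c\mapsto 2c\bmod n$ is a bijection, so $(b+2c)\bmod n$ ranges over all residues and $h(b,n)=\frac{n-1}{2n}$ independently of $b$. If $n$ is even, $(b+2c)\bmod n$ hits every residue of the same parity as $b$ exactly twice, yielding $h(b,n)=\frac{n-2}{2n}$ for $b$ even and $h(b,n)=\frac{1}{2}$ for $b$ odd.

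The second step is to carry out the outer sum in three cases. If $n$ is odd, $h$ is constant and the computation collapses to that of Lemma \ref{le:xxdelta}, giving $\frac{(m-1)(n-1)}{4mn}$. If $n$ is even but $m$ is odd, then $d$ is odd, so for each fixed $a$ the $n/d$ admissible values $b\equiv a\pmod d$ alternate in parity; since $n$ is even and $d$ is odd, $n/d$ is even, half of the $b$'s are even and half are odd, so the average of $h(b,n)$ is again $\frac{n-1}{2n}$ and we recover the same answer. If both $m$ and $n$ are even, $d$ is even and the congruence $a\equiv b\pmod d$ forces $a$ and $b$ to share parity, so the $h$-values no longer average out. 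One then splits the outer sum by the parity of $a$, using $\sum_{a\text{ even}} a/m=\frac{m-2}{4}$ and $\sum_{a\text{ odd}} a/m=\frac{m}{4}$, and combines to obtain $\frac{(m-1)(n-1)+1}{4mn}=\frac{1}{mn}\bigl(\tfrac{(m-1)(n-1)}{4}+\tfrac{1}{4}\bigr)$, exactly the stated formula.

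The main obstacle is the delicate accounting in the even-even case: one must verify that, after the parity split and the weighting by $a/m$, all $d$-dependent factors cancel to produce the clean extra term $\frac{1}{4mn}$ rather than a gcd-dependent residue. Everything else is routine bookkeeping parallel to Lemma \ref{le:xxdelta}, together with the standard use of Assumption \ref{ass} to decouple $[x^\delta]\bmod n$ from $(x\bmod m,\,x\bmod n)$ via CRT modulo $L$.
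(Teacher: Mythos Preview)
Your argument is correct and reaches the same conclusion as the paper, but the route is organized differently. The paper's proof is very short: for the even--even case it simply observes that $x\bmod m$ and $(x+2[x^\delta])\bmod n$ must share parity, asserts directly that the pair is asymptotically \emph{uniform} over the $mn/2$ same-parity pairs, and then computes
\[
\frac{2}{mn}\sum_{l\in\{0,1\}}\Bigl(\sum_{\substack{0\le r_m<m\\ r_m\equiv l\,(2)}}\frac{r_m}{m}\Bigr)\Bigl(\sum_{\substack{0\le r_n<n\\ r_n\equiv l\,(2)}}\frac{r_n}{n}\Bigr)
=\frac{1}{mn}\Bigl(\frac{(m-1)(n-1)}{4}+\frac{1}{4}\Bigr).
\]
You instead condition first on the CRT-compatible pair $(x\bmod m,\,x\bmod n)$, then average over $[x^\delta]\bmod n$ via the function $h(b,n)$, and finally handle the three parity cases for $(m,n)$ separately. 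What you gain is rigor: you make explicit why the $\gcd(m,n)$-dependence cancels (the paper's uniformity claim hides this) and your use of Assumption~\ref{ass} through $L=\mathrm{lcm}(m,n)$ is cleaner than the paper's informal appeal. What the paper gains is brevity: once one believes the uniformity over same-parity pairs, the parity split you perform in the even--even case is exactly the paper's double sum over $l\in\{0,1\}$, so your final computation is the same identity unpacked.
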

\begin{proof}
If either $m$ or $n$ is odd, the statement follows using some analogous arguments as in the proof of Lemma \ref{le:xxdelta}.
If both $m$ and $n$ are even, then $x\modu m$ and $x+2[x^\delta]\modu n$ have the same parity. Since there are $\frac{mn}{2}$ pairs $(r_m \modu m, r_n \modu n)$ with the same parity and each of these pairs is equally probable, we conclude that
\begin{align*}
\lim_{X\rightarrow \infty} \frac{1}{X}\sum_{x=1}^X \bigg\{\frac{x}{m}\bigg|\frac{x+2[x^\delta]}{n}\bigg\}
&=\frac{2}{mn}\sum_{l\in \{0,1\}}\left(\sum_{\substack{0\leq r_m<m\\ r_m \equiv l \modu 2}}\frac{r_m}{m}\right)\left(\sum_{\substack{0\leq r_n<n\\ r_n \equiv l \modu 2}}\frac{r_n}{n}\right)\\
&=\frac{1}{mn}\left(\frac{(m-1)(n-1)}{4}+\frac{1}{4}\right).
\end{align*}
\end{proof}
\begin{mylem}\label{le:x2xdelta1}
Assuming \ref{ass}, we have for any $m,n$
$$\lim_{X\rightarrow \infty} \frac{1}{X}\sum_{x=1}^X \bigg\{\frac{x}{m}\bigg|\frac{x+2[x^\delta]+1}{n}\bigg\}=\left\{ \begin{array}{l l} \frac{1}{mn}\left(\frac{(m-1)(n-1)}{4}-\frac{1}{4}\right) & \textrm{if $m$ is even, $n$ is even}\\
\frac{1}{mn}\left(\frac{(m-1)(n-1)}{4}\right)& \textrm{otherwise.}
\end{array}\right.$$
\end{mylem}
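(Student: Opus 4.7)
The plan is to mimic the proof of Lemma \ref{le:x2xdelta} verbatim, with the single change that the shift is now by the \emph{odd} integer $2[x^\delta]+1$ instead of the even integer $2[x^\delta]$. This changes exactly one thing: when both $m$ and $n$ are even, the parities of $x \bmod m$ and of $x+2[x^\delta]+1 \bmod n$ are now \emph{opposite} rather than equal.

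I would first dispose of all cases in which at least one of $m,n$ is odd. If $n$ is odd, then $2$ is invertible modulo $n$, so by Assumption \ref{ass} the residue $2[x^\delta]+1 \bmod n$ is uniformly distributed on $\mathbb{Z}/n$ and independent of $x \bmod m$; the joint distribution of $(x \bmod m,\,x+2[x^\delta]+1 \bmod n)$ is therefore uniform on $(\mathbb{Z}/m)\times (\mathbb{Z}/n)$, giving the desired value $\tfrac{1}{mn}\cdot\tfrac{(m-1)(n-1)}{4}$. If $m$ is odd and $n$ is even, a brief computation with $d=\gcd(m,n)$ (which is odd) shows that the $n/d$ residues $b \bmod n$ satisfying $b\equiv a\bmod d$ split evenly in parity, so integrating the conditional uniform distribution of $c=x+2[x^\delta]+1\bmod n$ on the parity opposite to $b$ still produces the uniform distribution on $(\mathbb{Z}/m)\times(\mathbb{Z}/n)$.

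In the remaining case, both $m$ and $n$ are even. Then $x+2[x^\delta]+1 \equiv x+1 \pmod 2$, so $x\bmod m$ and $x+2[x^\delta]+1 \bmod n$ have opposite parities; by Assumption \ref{ass} each of the $mn/2$ such pairs is attained with equal probability $2/(mn)$. The limit therefore equals
$$\frac{2}{mn}\sum_{l\in\{0,1\}}\Biggl(\sum_{\substack{0\leq r_m<m\\ r_m\equiv l\ (\mathrm{mod}\ 2)}}\frac{r_m}{m}\Biggr)\Biggl(\sum_{\substack{0\leq r_n<n\\ r_n\equiv 1-l\ (\mathrm{mod}\ 2)}}\frac{r_n}{n}\Biggr),$$
and plugging in the elementary identities $\sum_{r<m,\,r\text{ even}}r=m(m-2)/4$ and $\sum_{r<m,\,r\text{ odd}}r=m^2/4$ (and the analogous ones for $n$) yields $\tfrac{mn-m-n}{4mn}=\tfrac{1}{mn}\bigl(\tfrac{(m-1)(n-1)}{4}-\tfrac14\bigr)$, as claimed. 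The main obstacle is purely bookkeeping: one must correctly swap the parity constraint from ``equal'' (as in Lemma \ref{le:x2xdelta}) to ``opposite'', and verify that no parity constraint survives when exactly one of $m,n$ is odd.
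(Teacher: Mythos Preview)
Your proof is correct and follows exactly the approach of the paper, which simply records that the argument is ``exactly analogous to the proof of Lemma~\ref{le:x2xdelta}''. You have in fact supplied more detail than the paper does, correctly identifying that the odd shift flips the parity constraint in the even/even case and carrying out the resulting bookkeeping.
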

\begin{proof}
This is exactly analogous to the proof of Lemma \ref{le:x2xdelta}
\end{proof}
\begin{mythe}\label{th:x2deltax}
Assuming \ref{ass}, we have
$$
\sum_{m,n=1}^\infty\lim_{X\rightarrow \infty}\frac{1}{X}\sum_{x=1}^X \frac{\mu(m)\mu(n)}{mn}\left(\bigg\{\frac{x}{n}\bigg|\frac{x}{m}\bigg\}-\bigg\{\frac{x+2[x^\delta]}{n}\bigg|\frac{x}{m}\bigg\}-\bigg\{\frac{x}{n}\bigg|\frac{x+2[x^\delta]}{m}\bigg\}\right. $$ $$ \left.+\bigg\{\frac{x+2[x^\delta]}{n}\bigg|\frac{x+2[x^\delta]}{m}\bigg\}\right)
=\frac{1}{6\zeta(2)}-\frac{2}{9\zeta(2)^2}.
$$
\end{mythe}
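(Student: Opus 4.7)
The plan is to follow the template of the proofs of Theorems \ref{th:varsumphi} and \ref{th:Hxdelta}: evaluate each of the four inner limits and then sum against $\mu(m)\mu(n)/(mn)$. Denote by $L_1, L_2, L_3, L_4$ the four inner limits in the order they appear in the statement, so that the quantity to be computed is $\sum_{m,n}\frac{\mu(m)\mu(n)}{mn}(L_1-L_2-L_3+L_4)$.

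The unshifted term is given by Lemma \ref{le:sumymn}(1), yielding $L_1=\frac{1}{mn}\bigl(\tfrac{(m-1)(n-1)}{4}+\tfrac{\gcd(m,n)^2-1}{12}\bigr)$. For $L_4$, both arguments are shifted by the same quantity $2[x^\delta]$, and the same reasoning used in the proof of Theorem \ref{th:Hxdelta} applies verbatim: under Assumption \ref{ass}, $x+2[x^\delta]$ is equidistributed modulo $\lcm(m,n)$, because a uniform random variable plus an independent shift is still uniform. Hence $L_4=L_1$. The mixed terms $L_2$ and $L_3$ are equal by the symmetry $m\leftrightarrow n$, and Lemma \ref{le:x2xdelta} evaluates them: $L_2=L_3=\frac{1}{mn}\cdot\frac{(m-1)(n-1)}{4}$ unless both $m$ and $n$ are even, in which case the value is higher by $\frac{1}{4mn}$.

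Combining with the signs in the statement, the $\frac{(m-1)(n-1)}{4mn}$ pieces cancel, and a direct simplification gives
\[
L_1-L_2-L_3+L_4=\begin{cases}\dfrac{\gcd(m,n)^2-1}{6mn} & \text{in general},\\[4pt] \dfrac{\gcd(m,n)^2-4}{6mn} & \text{if $m$ and $n$ are both even}.\end{cases}
\]
Multiplying by $\mu(m)\mu(n)/(mn)$ and summing, the general contribution evaluates by Lemma \ref{le:sumgcd} together with the identity $\sum_{m}\mu(m)/m^2 = 1/\zeta(2)$ to $\frac{1}{6\zeta(2)}-\frac{1}{6\zeta(2)^2}$; the additional correction from the even--even case contributes $-\frac{1}{2}\bigl(\sum_{n\text{ even}}\mu(n)/n^2\bigr)^2 = -\frac{1}{18\zeta(2)^2}$ by Lemma \ref{le:sumneo}(1). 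The total is $\frac{1}{6\zeta(2)}-\frac{1}{6\zeta(2)^2}-\frac{1}{18\zeta(2)^2}=\frac{1}{6\zeta(2)}-\frac{2}{9\zeta(2)^2}$, as claimed.

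The only delicate step is the identity $L_4=L_1$: Assumption \ref{ass} as literally stated concerns only pairwise independence between $x\bmod n$ and $[x^\delta]\bmod m$, whereas this step requires the stronger joint statement that $[x^\delta]\bmod L$ is equidistributed and independent of $x\bmod L$ for $L=\lcm(m,n)$. The same extrapolation is implicit in the proof of Theorem \ref{th:Hxdelta}, so this is consistent with the paper's use of Assumption \ref{ass}; otherwise the calculation is a routine combination of Lemmas \ref{le:sumymn}, \ref{le:x2xdelta}, \ref{le:sumgcd} and \ref{le:sumneo}.
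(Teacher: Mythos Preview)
Your proof is correct and follows essentially the same route as the paper: compute $L_1$ and $L_4$ via Lemma~\ref{le:sumymn}(1) (with $L_4=L_1$ by the same application of Assumption~\ref{ass} as in Theorem~\ref{th:Hxdelta}), compute $L_2=L_3$ via Lemma~\ref{le:x2xdelta}, and then arrive at precisely the expression $\sum_{m,n}\frac{\mu(m)\mu(n)}{m^2n^2}\cdot\frac{\gcd(m,n)^2-1}{6}-\frac{1}{2}\sum_{m,n\text{ even}}\frac{\mu(m)\mu(n)}{m^2n^2}$ that the paper writes down. Your closing remark about the implicit strengthening of Assumption~\ref{ass} needed for $L_4=L_1$ is a fair observation, and you are right that the paper uses it the same way in the proof of Theorem~\ref{th:Hxdelta}.
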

\begin{proof}
Applying Lemma \ref{le:sumymn} and Lemma \ref{le:x2xdelta} in the same way as in the proof of Theorem \ref{th:Hxdelta}, we see that the value we want to calculate equals
$$\sum_{m,n=1}^\infty \frac{\mu(m)\mu(n)}{m^2n^2}\left(\frac{\gcd(m,n)^2-1}{6}\right)-\frac{1}{2}\sum_{\substack{m\ \mathrm{even},\\ n\ \mathrm{even}}}\frac{\mu(m)\mu(n)}{m^2n^2}=\frac{1}{6\zeta(2)}-\frac{2}{9\zeta(2)^2}.$$
\end{proof}
\begin{mythe}
Assuming \ref{ass}, we have
$$
\sum_{m,n=1}^\infty\lim_{X\rightarrow \infty}\frac{1}{X}\sum_{x=1}^X \frac{\mu(m)\mu(n)}{mn}\left(\bigg\{\frac{x}{n}\bigg|\frac{x}{m}\bigg\}-\bigg\{\frac{x+2[x^\delta]+1}{n}\bigg|\frac{x}{m}\bigg\}-\bigg\{\frac{x}{n}\bigg|\frac{x+2[x^\delta]+1}{m}\bigg\}\right. $$ $$ \left.+\bigg\{\frac{x+2[x^\delta]+1}{n}\bigg|\frac{x+2[x^\delta]+1}{m}\bigg\}\right)
=\frac{1}{6\zeta(2)}-\frac{1}{9\zeta(2)^2}.
$$
\end{mythe}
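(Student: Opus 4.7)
The plan is to follow the proof of Theorem~\ref{th:x2deltax} essentially verbatim, replacing only Lemma~\ref{le:x2xdelta} with Lemma~\ref{le:x2xdelta1} when handling the two cross terms. First, I would argue that the two diagonal contributions, coming from $\{x/n|x/m\}$ and $\{(x+2[x^\delta]+1)/n|(x+2[x^\delta]+1)/m\}$, produce identical limits as $X\to\infty$: under Assumption~\ref{ass} the shift by $2[x^\delta]+1$ acts as a uniform translation of the pair of residues $(x\modu m, x\modu n)$, independent of $x$ itself, so the joint distribution, and hence the average, is unchanged. Applying Lemma~\ref{le:sumymn}(1), each diagonal term contributes $\frac{1}{mn}\bigl(\frac{(m-1)(n-1)}{4}+\frac{\gcd(m,n)^2-1}{12}\bigr)$.

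Next, Lemma~\ref{le:x2xdelta1} evaluates the two cross terms: each contributes $\frac{(m-1)(n-1)}{4mn}$ in general, and an additional $-\frac{1}{4mn}$ when both $m$ and $n$ are even. Forming the signed combination of four terms that appears in the theorem, all occurrences of $\frac{(m-1)(n-1)}{4mn}$ cancel exactly, and the $(m,n)$-summand reduces to
$$\frac{\gcd(m,n)^2-1}{6mn} + \frac{1}{2mn}\,\mathbf{1}_{m,n \text{ even}}.$$

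It then suffices to multiply by $\mu(m)\mu(n)/(mn)$ and sum over $m,n\geq 1$. Using Lemma~\ref{le:sumgcd} together with the standard identity $\sum_n \mu(n)/n^2 = 1/\zeta(2)$, the first piece produces $\frac{1}{6\zeta(2)}-\frac{1}{6\zeta(2)^2}$. By Lemma~\ref{le:sumneo}(1) the second piece produces $\frac{1}{2}\bigl(-\frac{1}{3\zeta(2)}\bigr)^{2}=\frac{1}{18\zeta(2)^2}$. Adding,
$$\frac{1}{6\zeta(2)}-\frac{1}{6\zeta(2)^2}+\frac{1}{18\zeta(2)^2}=\frac{1}{6\zeta(2)}-\frac{1}{9\zeta(2)^2},$$
which is the claimed value.

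The entire contrast with Theorem~\ref{th:x2deltax} is accounted for by the sign of the ``even-even'' correction: here $-\frac{1}{4mn}$ (since adding an odd number forces opposite parities in the two residues), versus $+\frac{1}{4mn}$ there (adding an even number preserves parity), which flips the $\frac{1}{18\zeta(2)^2}$ contribution and turns $-\frac{2}{9\zeta(2)^2}$ into $-\frac{1}{9\zeta(2)^2}$. No real obstacle arises beyond invoking Assumption~\ref{ass}; the argument is a direct parallel to the previous theorem, and the only bookkeeping to double-check is the matching of signs between the corrections of Lemmas~\ref{le:x2xdelta} and~\ref{le:x2xdelta1} and the overall negative sign of the cross terms in the variance.
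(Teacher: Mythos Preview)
Your proposal is correct and follows exactly the approach of the paper, which simply states ``Analogous to the proof of Theorem~\ref{th:x2deltax}.'' You have in fact spelled out that analogy in full: replacing Lemma~\ref{le:x2xdelta} by Lemma~\ref{le:x2xdelta1} flips the sign of the even--even correction, turning the $-\tfrac{1}{2}\sum_{m,n\ \mathrm{even}}\mu(m)\mu(n)/(m^2n^2)$ of Theorem~\ref{th:x2deltax} into $+\tfrac{1}{2}\sum_{m,n\ \mathrm{even}}\mu(m)\mu(n)/(m^2n^2)=\tfrac{1}{18\zeta(2)^2}$, and the arithmetic you give is accurate.
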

\begin{proof}
Analogous to the proof of Theorem \ref{th:x2deltax}.
\end{proof}
\section{The polynomial ring over $\mathbb{F}_q$}\label{pa:fqt}
In this section we use the same notation as in the introduction. Furthermore we denote the normalized Euler totient function as $\beta(f):=\frac{\varphi(f)}{||f||}$.  For $A\in \mathcal{M}_n$, $0\leq h \leq n-2$, we write
$$\mathcal{N}_\beta(A;h):=\sum_{f\in I(A;h)}\beta(f).$$
The variance, (for fixed $n$ and $0\leq h\leq n-2$), is defined as
$$\textrm{Var}(\mathcal{N}_\beta):=\frac{1}{q^n}\sum_{A\in\mathcal{M}_n}\left|\mathcal{N}_\beta(A;h)-\frac{1}{q^n}\sum_{A\in\mathcal{M}_n}\mathcal{N}_\beta(A;h)\right|^2.$$
The main theorem of this paper, as was introduced in section \ref{pa:intr}, is then given by
\begin{mythe}\label{th:TH1}
Fix $0\leq h \leq n-5$. As $q\rightarrow \infty$,
$$\mathrm{Var}( \mathcal{N}_{\beta})\sim q^{-h-3}.$$
\end{mythe}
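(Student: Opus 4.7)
The plan is a direct Möbius expansion. Starting from $\beta(f)=\sum_{d\mid f}\mu(d)/\|d\|$ (with $d$ monic) and interchanging summation gives
$$\mathcal{N}_\beta(A;h)=\sum_{\substack{d\text{ monic}\\\deg d\le n}}\frac{\mu(d)}{\|d\|}\,N(A,h,d),\qquad N(A,h,d):=\#\{f\in I(A;h):d\mid f\}.$$
A short count (writing $f=A+r$ with $\deg r\le h$) shows that $N(A,h,d)=q^{h+1-\deg d}$ is independent of $A$ when $\deg d\le h$, whereas for $\deg d>h$ one has $N(A,h,d)=\mathbf{1}[\deg(A\bmod d)\le h]$. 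The Dirichlet-series identity $\sum_d\mu(d)\|d\|^{-s}=1-q^{1-s}$ (equivalently $\sum_{\deg d=k}\mu(d)=0$ for $k\ge 2$) collapses the $A$-independent portion into the exact mean $q^{h+1}/\zeta_q(2)$, yielding
$$\mathcal{N}_\beta(A;h)-\frac{q^{h+1}}{\zeta_q(2)}=\sum_{\substack{d\text{ sqfree}\\ h<\deg d\le n}}\frac{\mu(d)}{\|d\|}\,X_d(A),\qquad X_d(A):=\mathbf{1}[\deg(A\bmod d)\le h]-\frac{q^{h+1}}{\|d\|}.$$
A first cancellation is immediate: $A\bmod d$ has degree strictly less than $\deg d$, so for $\deg d=h+1$ both terms in $X_d$ equal $1$ and the effective summation begins at $\deg d\ge h+2$.

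Next, expand the variance as $\sum_{d_1,d_2}\mu(d_1)\mu(d_2)(\|d_1\|\|d_2\|)^{-1}\mathrm{Cov}(X_{d_1},X_{d_2})$ and compute the covariance by a Chinese Remainder Theorem count of pairs of degree-$\le h$ residues compatible modulo $G:=\gcd(d_1,d_2)$. The outcome is
$$\mathrm{Cov}(X_{d_1},X_{d_2})=\begin{cases}0&\text{if }\deg G\le h,\\[3pt]\dfrac{q^{h+1}(\|G\|-q^{h+1})}{\|d_1\|\|d_2\|}&\text{if }\deg G>h,\end{cases}$$
and a second cancellation appears at $\deg G=h+1$, where $\|G\|-q^{h+1}$ vanishes identically. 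So only $(d_1,d_2)$ with $\deg G\ge h+2$ survive; substituting $d_i=Gd_i'$ with $d_1',d_2'$ squarefree, pairwise coprime and coprime to $G$, the variance factorizes as
$$\mathrm{Var}(\mathcal{N}_\beta)=q^{h+1}\sum_{\substack{G\text{ sqfree}\\ h+1<\deg G\le n}}\frac{\mu(G)^2(\|G\|-q^{h+1})}{\|G\|^4}\,S_G,$$
where $S_G=\sum\mu(d_1')\mu(d_2')\|d_1'\|^{-2}\|d_2'\|^{-2}$ (with the coprimality and degree constraints) is an Euler-product-like sum satisfying $S_G=1+O(1/q)$ uniformly in $G$ as $q\to\infty$, the dominant contribution coming from $d_1'=d_2'=1$.

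The dominant band is $\deg G=h+2$, where $\|G\|-q^{h+1}=q^{h+1}(q-1)$, $\|G\|^4=q^{4h+8}$, and the number of monic squarefree $G$ of degree $h+2$ is $q^{h+2}-q^{h+1}$. Combining these ingredients gives
$$q^{h+1}\cdot(q^{h+2}-q^{h+1})\cdot\frac{q^{h+1}(q-1)}{q^{4h+8}}\,(1+o(1))=\frac{(q-1)^2}{q^{h+5}}(1+o(1))\sim q^{-h-3}.$$
Every higher band $\deg G=g\ge h+3$ contributes at total order $O(q^{h+1-2g})\ll q^{-h-3}$, and the hypothesis $h\le n-5$ ensures the truncation $\deg d_i\le n$ never bites into the dominant band and keeps the corrections absorbed in $S_G$ genuinely smaller. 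The main obstacle is conceptual rather than computational: one has to spot the two consecutive cancellations (of $X_d$ at $\deg d=h+1$ and of $\|G\|-q^{h+1}$ at $\deg G=h+1$) that wipe out the would-be leading $q^{-h-1}$ contributions and leave the genuine asymptotic $q^{-h-3}$, matching the ``(unexpected?)\ cancellation'' noted in the introduction.
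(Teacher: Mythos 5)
Your route is genuinely different from the paper's, which expresses the variance via even Dirichlet characters modulo $T^{n-h}$, computes $\mathcal{M}(m;\beta\chi)$ from the generating function $\mathcal{L}(u,\chi)/\mathcal{L}(u/q,\chi)$, and then invokes Katz's equidistribution theorem to evaluate $\sum_\chi |\tr\,\sym^{h+2}\Theta_\chi|^2$ as a matrix integral over $U(n-h-2)$. Your M\"obius expansion, the exact mean $q^{h+1}/\zeta_q(2)$, the two cancellations at $\deg d=h+1$ and $\deg G=h+1$, and the main-term computation from the band $\deg G=h+2$, $d_1=d_2=G$ are all correct and give exactly $(q-1)^2q^{-h-5}\sim q^{-h-3}$; this is an illuminating elementary explanation of where the extra factor $q^{-2}$ comes from, which the paper itself admits it lacks.

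There is, however, a genuine gap in the error analysis. Your covariance formula rests on the count $\#\{A\in\mathcal{M}_n:\ A\equiv r_i \modu{d_i},\ i=1,2\}=q^{n-\deg \lcm(d_1,d_2)}$, which is valid only when $\deg\lcm(d_1,d_2)\le n$. Pairs with $\deg d_1+\deg d_2-\deg G>n$ occur throughout your sum (e.g.\ $\deg d_1=\deg d_2=n$ with $\deg G=h+2$), and for them $\mathrm{Cov}(X_{d_1},X_{d_2})$ is not given by your formula, so they are not actually absorbed into $S_G$. Nor can they be dismissed trivially: Cauchy--Schwarz gives only $|\mathrm{Cov}(X_{d_1},X_{d_2})|\le q^{h+1}(\|d_1\|\,\|d_2\|)^{-1/2}$, and summing this over all pairs with $\deg d_1+\deg d_2>n$ yields a bound of order $q^{h+1-n/2}$, which for $n$ near $h+5$ is roughly $q^{(h-3)/2}$ --- vastly larger than the main term $q^{-h-3}$. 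Concretely, the band $\deg d_1=\deg d_2=n$ reduces to $q^{-2n}$ times the mean square of $\sum_{\deg r\le h}\mu(A-r)$, i.e.\ to square-root cancellation of M\"obius sums in short intervals, which is precisely the hard Keating--Rudnick input that the paper's character/equidistribution machinery supplies. As written, your argument proves the theorem only when $n$ is large compared to $h$ (roughly $n\ge 3h+8$, by truncating at $\deg d\le D$ with $2D-h-2\le n$ and $D\ge 2h+5$), not in the full range $0\le h\le n-5$. To close the gap you must either import the short-interval M\"obius cancellation result or pass to the character-sum framework for the large-degree divisors.
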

For the proof, we first introduce the notions of Dirichlet characters and of nice arithmetic functions.
For any polynomial $Q$, we say that $\chi: \mathbb{F}_q[T]\rightarrow \mathbb{C}$ is \emph{Dirichlet character} modulo $Q$ if
\begin{itemize}
\item $\chi(fg)=\chi(f)\chi(g)$ for all $f,g\in \mathbb{F}_q[T]$;
\item $\chi(1)=1$;
\item $\chi(f+gQ)=\chi(f)$ for all $f,g\in \mathbb{F}_q[T]$;
\item $\chi(f)=0$ if $\gcd(f,Q)\neq 1$.
\end{itemize}
By $\chi_0$ we denote the principal Dirichlet charater modulo $Q$. That is $\chi_0(f)=1$ for all $\gcd(f,Q)=1$. A character $\chi$ mod $Q$ is primitive if there do not exist a proper divisor $Q'$ of $Q$ and a Dirichlet character $\chi' \modu Q'$, such that $\chi(f)=\chi'(f)$ for all $f\equiv 1 \modu Q'$ and $\gcd(f,Q)=1$. A character $\chi$ is even if  $\chi(f)=\chi(cf)$ for all $f\in \mathbb{F}_q[T]$, $c\in \mathbb{F}_q^\times$.\\

\noindent Furthermore we define an arithmetic function $\alpha: \mathbb{F}_q[T]\rightarrow \mathbb{C}$ to be \emph{nice} if  
\begin{itemize}
\item $\alpha$ is even: $\alpha(f)=\alpha(cf)$ for all $f\in \mathbb{F}_q[T]$, $c\in \mathbb{F}_q^\times$.
\item $\alpha$ is multiplicative: $\alpha(fg)=\alpha(f)\alpha(g)$ for all coprime $f,g\in \mathbb{F}_q[T]$.
\item $\alpha(f)=\alpha(f^*)$ for all $f$ with $f(0)\neq 0$. Here the map $(.)^*$ is given by $f^*(T)=T^{\deg f}f\left(\frac{1}{T}\right)$.
\end{itemize} 
\noindent The starting point of proving Theorem \ref{th:TH1} will be the following lemma, first proven by Keating and Rudnick in \cite{Kea16}.
\begin{mylem}[\cite{Kea16}, Lemma 5.3]
Let $\alpha$ be a nice function. Then
\begin{equation}\label{var}
\mathrm{Var}( \mathcal{N}_\alpha)=\frac{1}{\varphi^*_{\mathrm{ev}}(T^{n-h})^2}\sum_{\substack{\chi \modu{T^{n-h}}\\ \chi\neq \chi_0 \ \mathrm{ even}}}\left|\sum_{m=0}^n\alpha(T^{n-m})\mathcal{M}(m;\alpha\chi)\right|^2.
\end{equation}
Here $\varphi^*_{\mathrm{ev}}(f)$ denotes the number of even primitive characters modulo $f\in \mathbb{F}_q[T]$ and $$\mathcal{M}(m;\alpha\chi):=\sum_{f\in \mathcal{M}_m}\alpha(f)\chi(f).$$
\end{mylem}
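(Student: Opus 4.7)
The plan is to exploit the standard dictionary between short intervals and arithmetic progressions in $\mathbb{F}_q[T]$ supplied by the involution $f\mapsto f^*$, and then to use orthogonality of Dirichlet characters modulo $T^{n-h}$ to turn the variance into a diagonal second moment of character sums indexed by even (primitive) characters.

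First, I would verify that for $f\in\mathcal{M}_n$, writing $f=T^{n-m}\tilde f$ with $\tilde f$ monic of degree $m$ and $\tilde f(0)\neq 0$, multiplicativity and niceness of $\alpha$ give $\alpha(f)=\alpha(T^{n-m})\alpha(\tilde f)=\alpha(T^{n-m})\alpha(\tilde f^*)$, while a direct calculation shows $f^*=\tilde f^*$ has degree exactly $m$ and constant term $1$. The short-interval condition $f\in I(A;h)$ translates under the involution to the congruence $f^*\equiv A^*\modu T^{n-h}$, since matching the top $n-h$ coefficients of $f$ and $A$ is equivalent to matching the bottom $n-h$ coefficients of their stars. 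Because $A$ is monic, $A^*(0)=1$ and $\gcd(A^*,T^{n-h})=1$. Grouping by $m=\deg f^*$ and using evenness of $\alpha$ to pass from the natural representative $g=f^*$ (normalized by $g(0)=1$) to the monic representative in its $\mathbb{F}_q^\times$-orbit yields
\begin{equation*}
\mathcal{N}_\alpha(A;h)=\sum_{m=0}^n \alpha(T^{n-m})\sum_{\substack{g\in\mathcal{M}_m\\ g\equiv A^*\modu T^{n-h}}}\alpha(g),
\end{equation*}
with the congruence interpreted modulo the $\mathbb{F}_q^\times$-scaling that evenness absorbs.

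Second, I would apply Dirichlet orthogonality modulo $T^{n-h}$ to the inner sum, replacing it by $\varphi(T^{n-h})^{-1}\sum_\chi\bar\chi(A^*)\mathcal{M}(m;\alpha\chi)$. The principal character contribution recovers the mean of $\mathcal{N}_\alpha$, so centering removes it. A change of variable $f\mapsto cf$ for $c\in\mathbb{F}_q^\times$ inside $\mathcal{M}(m;\alpha\chi)$, combined with evenness of $\alpha$, forces $\mathcal{M}(m;\alpha\chi)=0$ whenever $\chi$ is odd, so only even characters survive. Squaring the resulting identity for $\mathcal{N}_\alpha(A;h)-(\text{mean})$, averaging over $A\in\mathcal{M}_n$, and using that $A\mapsto A^*$ is a bijection of $\mathcal{M}_n$ onto $\{g:g(0)=1,\ \deg g\leq n\}$, orthogonality of characters on $(\mathbb{F}_q[T]/T^{n-h})^\times$ kills all cross terms and produces a diagonal sum
\begin{equation*}
\mathrm{Var}(\mathcal{N}_\alpha)=\frac{\mathrm{const}}{\varphi(T^{n-h})^2}\sum_{\substack{\chi\modu T^{n-h}\\ \chi\neq\chi_0,\ \text{even}}}\left|\sum_{m=0}^n \alpha(T^{n-m})\mathcal{M}(m;\alpha\chi)\right|^2,
\end{equation*}
where the proportionality constant comes from the sizes $|\mathcal{M}_n|=q^n$, $|\mathbb{F}_q^\times|=q-1$, and $\varphi(T^{n-h})=(q-1)q^{n-h-1}$.

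Finally, the stated normalization $\varphi^*_{\mathrm{ev}}(T^{n-h})^{-2}$ emerges by reorganizing the sum according to the primitive inducer of each even character: since $T$ is the only prime dividing the conductor, an imprimitive $\chi\modu T^{n-h}$ comes from a unique even primitive $\chi^*\modu T^k$ with $k<n-h$, and one checks $\mathcal{M}(m;\alpha\chi)=\mathcal{M}(m;\alpha\chi^*)$ because $\chi$ and $\chi^*$ agree on polynomials coprime to $T$ and both vanish elsewhere. After collapsing contributions of imprimitive characters onto their inducers and comparing $\varphi(T^{n-h})$ with $\varphi^*_{\mathrm{ev}}(T^{n-h})$, the overall normalization cleans up to the claimed form. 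The main obstacle is precisely this last combinatorial step: verifying that all normalization constants arising from the $\mathbb{F}_q^\times$-scaling, the primitive reduction, and the degree bookkeeping $m=0,\ldots,n$ combine into exactly $\varphi^*_{\mathrm{ev}}(T^{n-h})^{-2}$, while ensuring that the boundary contributions from low-degree $g$ and from $f$ with $f(0)=0$ (where the involution lowers degree) do not introduce spurious terms.
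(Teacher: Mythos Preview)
The paper does not supply a proof of this lemma: it is quoted verbatim as \cite[Lemma~5.3]{Kea16} and then simply used. So there is no ``paper's own proof'' to compare against; your sketch is effectively a reconstruction of the Keating--Rudnick argument, and the first two steps (the involution $f\mapsto f^*$ translating short intervals into congruence classes modulo $T^{n-h}$, then character orthogonality with the observation that odd characters drop out by evenness of $\alpha$) are exactly the right mechanism.

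That said, your final paragraph contains a genuine problem. The displayed formula sums over \emph{all} even non-principal characters modulo $T^{n-h}$, not only the primitive ones, so ``collapsing contributions of imprimitive characters onto their inducers'' would alter the sum, not merely the normalizing constant. The normalization $\varphi^*_{\mathrm{ev}}(T^{n-h})^{-2}$ does not arise from any primitive-reduction trick; rather, it comes out of the bookkeeping of the $A\leftrightarrow A^*$ bijection together with the count of even characters and the size $|\mathcal{M}_n|=q^n$. Concretely, the number of even characters modulo $T^{n-h}$ is $q^{n-h-1}$, while $\varphi^*_{\mathrm{ev}}(T^{n-h})=q^{n-h-2}(q-1)$; these differ, so the constant you produce from straight orthogonality will not be $\varphi^*_{\mathrm{ev}}^{-2}$ without a further factor coming from the averaging over $A$ (the map $A\mapsto A^*\bmod T^{n-h}$ is not a bijection onto $(\mathbb{F}_q[T]/T^{n-h})^\times/\mathbb{F}_q^\times$ but has fibres of controlled size). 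You have correctly flagged this as the main obstacle; the fix is to track that fibre count carefully rather than to invoke primitivity.
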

\noindent In particular $\varphi^*_{\mathrm{ev}}(T^{n-h})=q^{n-h-2}(q-1)$, see \cite[\S 3.3]{Kea14}.\\

\noindent Note that our function $\beta$ is even and multiplicative, as the functions $\varphi$ and $||.||$ are. Moreover $||f||=||f^*||$ for $f(0)\neq 0$. The same holds for $\varphi$, as is implied by the following lemma. We conclude that $\beta$ is nice.
\begin{mylem}\label{le:prphff} For all $f\in\mathbb{F}_q[T]$, $f\neq 0$, the following statements hold:
\begin{enumerate}
\item $||f||=\sum_{\substack{g|f,\\ g \ \mathrm{ monic}}} \varphi(g)$.
\item $\varphi = ||.|| * \mu$.
\item $$\varphi(f) = \left||f|\right| \prod_{\substack{ P|f \\ \mathrm{monic,}\\ \mathrm{irreducible}}} \left(1-\frac{1}{\left||P|\right|}\right).$$
\end{enumerate}
\end{mylem}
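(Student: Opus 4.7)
The plan is to mirror the three classical identities for the Euler totient function in $\mathbb{Z}$, adapted to the polynomial ring $\mathbb{F}_q[T]$. I would first prove (1) directly from a counting argument in $\mathbb{F}_q[T]/(f)$, then obtain (2) as an immediate application of Möbius inversion over monic divisors, and finally deduce (3) from the multiplicativity of $\varphi$ together with its value on prime powers.

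For (1), partition the residue ring $\mathbb{F}_q[T]/(f)$ according to the gcd of a representative with $f$. Concretely, for each monic divisor $g$ of $f$, set $d := f/g$ and consider $S_g := \{ a \in \mathbb{F}_q[T]/(f) \ | \ \gcd(a,f) = d \}$. The map $b \mapsto d \cdot b$ is a bijection from $(\mathbb{F}_q[T]/(g))^\times$ to $S_g$: after dividing out $d$, the condition $\gcd(a,f)=d$ is equivalent to $\gcd(b,g)=1$. Therefore $|S_g| = \varphi(g)$, and since the $S_g$ (as $g$ ranges over monic divisors of $f$) partition $\mathbb{F}_q[T]/(f)$, we obtain $||f|| = \sum_{g|f,\ g\text{ monic}} \varphi(g)$.

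For (2), statement (1) reads $||.|| = \varphi * \mathbf{1}$ in the Dirichlet-type convolution over monic divisors of $\mathbb{F}_q[T]$. The identity $\sum_{g|f,\ g\text{ monic}}\mu(g) = [f=1]$ follows in $\mathbb{F}_q[T]$ by the same prime-factor-by-prime-factor argument as in $\mathbb{Z}$, so Möbius inversion yields $\varphi = ||.|| * \mu$. For (3), the Chinese Remainder Theorem for $\mathbb{F}_q[T]$ gives $\mathbb{F}_q[T]/(f_1 f_2) \cong \mathbb{F}_q[T]/(f_1) \times \mathbb{F}_q[T]/(f_2)$ whenever $\gcd(f_1,f_2)=1$, hence $\varphi$ is multiplicative on coprime monic factors. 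For a monic irreducible $P$ and $k\geq 1$, the non-units of $\mathbb{F}_q[T]/(P^k)$ are exactly the multiples of $P$, of which there are $||P^{k-1}|| = ||P^k||/||P||$, so $\varphi(P^k) = ||P^k||\left(1 - 1/||P||\right)$. Factoring a general monic $f$ into its monic irreducible prime-power components and applying multiplicativity then gives the product formula.

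There is no real obstacle here: the only bookkeeping point is to normalize to monic representatives throughout so that factorizations are unique and the divisor sums are well-defined. Once this convention is fixed, all three parts reduce to the standard $\mathbb{Z}$-proofs verbatim, with no analytic input required.
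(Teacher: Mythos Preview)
Your proof is correct and is precisely the classical argument the paper alludes to when it says ``the proof is an exact analogue of that of an analogous lemma in $\mathbb{Z}$.'' The only minor difference is order: the paper also notes (citing Rosen) that one may establish the product formula (3) first and then read off (1) and (2), whereas you go $(1)\Rightarrow(2)\Rightarrow(3)$; both routes are standard and equivalent in content.
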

\begin{proof}
The proof is an exact analogue of that of an analogous lemma in $\mathbb{Z}$. It is also possiblie the prove \ref{le:prphff}.3 immediately as Rosen does, \cite[Proposition 1.7]{Ros02}. The other two statements then trivially follow.
\end{proof}
We first prove Lemma \ref{le:essumM}, which states the cancellation that makes this variance so special. Note that for an even character $\chi$, its $L$-function is given by 
$$\mathcal{L}(u,\chi)=(1-u)\prod_{j=1}^{n-h-2}(1-\alpha_j u),$$
where the $\alpha_j$ denote the inverse zeroes of this $L$-function. By the Riemann hypothesis for curves over finite fields, see for example \cite[Theorem 5.10]{Ros02}, we know that $|\alpha_j|\leq \sqrt{q}$. Furthermore for primitive $\chi$ we know that $|\alpha_j|= \sqrt{q}$, implying that we can write
$$\mathcal{L}(u,\chi)=(1-u)\det(I-uq^{\frac{1}{2}}\Theta_\chi), \ \Theta_\chi=\textrm{diag}(e^{i\theta_1},\dots,e^{i\theta_N}).$$
We say that the unitary matrix $\Theta_\chi$ of size $N:=n-h-2$ is the unitarized Frobenius matrix of $\chi$. Note that $\Theta_\chi$ is not unique, so that it is actually a conjugacy class.
\begin{mylem}\label{le:esMn}
Let $\chi$ be a primitive even character$\modu{T^{n-h}}$. Then for any $m$, we have 
$$\mathcal{M}(m;\beta\chi)=\sum_{\substack{j+k+l=m\\ 0\leq j \leq N\\ k,l \geq 0}}\lambda_j(\chi)S_l(\chi)q^{\frac{j}{2}-k-\frac{l}{2}}-\sum_{\substack{j+k+l=m-1\\ 0\leq j \leq N\\ k,l \geq 0}}\lambda_j(\chi)S_l(\chi)q^{\frac{j}{2}-k-\frac{l}{2}}.$$
Here $\lambda_j(\chi)$ is given by the coefficient of $x^j$ in the expression $\det(I_N-x\Theta_\chi)$ and $$S_l(\chi)=\tr\ \mathrm{Sym}^l \Theta_\chi.$$
\end{mylem}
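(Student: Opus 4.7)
The plan is to identify $\mathcal{M}(m;\beta\chi)$ as the $m$th coefficient of a generating function and to factor that generating function through $\mathcal{L}(u,\chi)$ and $\mathcal{L}(u/q,\chi)$. Define
$$L(u,\beta\chi):=\sum_{f\text{ monic}}\beta(f)\chi(f)u^{\deg f}=\sum_{m\geq 0}\mathcal{M}(m;\beta\chi)u^m.$$
By Lemma \ref{le:prphff}.2, $\varphi=||\cdot||*\mu$, so $\beta(f)=\sum_{g\mid f}\mu(g)/||g||$. Since $\chi$ is completely multiplicative and $||g||=q^{\deg g}$, writing $f=gh$ and regrouping yields
$$L(u,\beta\chi)=\left(\sum_{g\text{ monic}}\mu(g)\chi(g)(u/q)^{\deg g}\right)\cdot\mathcal{L}(u,\chi)=\frac{\mathcal{L}(u,\chi)}{\mathcal{L}(u/q,\chi)},$$
using the M\"obius inversion identity $1/\mathcal{L}(v,\chi)=\sum_{g\text{ monic}}\mu(g)\chi(g)v^{\deg g}$ in $\mathbb{F}_q[T]$.

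Next I substitute the $L$-function factorisation recorded just before the lemma. For primitive even $\chi\modu{T^{n-h}}$ one has $\mathcal{L}(u,\chi)=(1-u)\det(I_N-u\sqrt{q}\,\Theta_\chi)$ with $N=n-h-2$, so by the defining property of the coefficients $\lambda_j(\chi)$,
$$\mathcal{L}(u,\chi)=(1-u)\sum_{j=0}^{N}\lambda_j(\chi)\,q^{j/2}u^j.$$
Substituting $u\mapsto u/q$ in this factorisation and inverting gives
$$\frac{1}{\mathcal{L}(u/q,\chi)}=\frac{1}{1-u/q}\cdot\frac{1}{\det(I_N-(u/\sqrt{q})\Theta_\chi)}=\left(\sum_{k\geq 0}q^{-k}u^k\right)\left(\sum_{l\geq 0}S_l(\chi)\,q^{-l/2}u^l\right),$$
using the geometric series and the classical identity $1/\det(I-xA)=\sum_{l\geq 0}\tr(\mathrm{Sym}^l A)\,x^l$ for a square matrix $A$. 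Multiplying the three power series and distributing the $(1-u)$ factor from $\mathcal{L}(u,\chi)$, the coefficient of $u^m$ splits into a difference of two convolutions, giving exactly
$$\sum_{\substack{j+k+l=m\\0\leq j\leq N,\,k,l\geq 0}}\lambda_j(\chi)S_l(\chi)q^{j/2-k-l/2}\;-\;\sum_{\substack{j+k+l=m-1\\0\leq j\leq N,\,k,l\geq 0}}\lambda_j(\chi)S_l(\chi)q^{j/2-k-l/2},$$
as claimed.

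The main point to verify carefully, rather than a true obstacle, is the recognition that the Dirichlet series for $\beta\chi$ is a ratio of $\mathcal{L}$ evaluated at $u$ and at $u/q$, and that it is precisely the shifted factor $1/(1-u/q)=\sum_{k\geq 0}q^{-k}u^k$ that injects the negative powers of $q$ into $\mathcal{M}(m;\beta\chi)$. This geometric series damps high powers of $q$ in $\mathcal{M}(m;\beta\chi)$ and is presumably what ultimately produces the striking cancellations yielding the $q^{-h-3}$ asymptotic in Theorem \ref{th:TH1}.
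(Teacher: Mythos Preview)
Your proof is correct and follows essentially the same route as the paper: both compute the generating function $\sum_{m\geq 0}\mathcal{M}(m;\beta\chi)u^m$, factor it as $\mathcal{L}(u,\chi)/\mathcal{L}(u/q,\chi)$ via the convolution identity $\beta=\mu/||\cdot||*1$, insert the factorisation $\mathcal{L}(u,\chi)=(1-u)\det(I_N-u\sqrt{q}\,\Theta_\chi)$, expand the three resulting power series, and read off the coefficient of $u^m$. The only differences are cosmetic (you invoke the M\"obius inversion for $1/\mathcal{L}$ explicitly and add an interpretive remark at the end).
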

\begin{proof}
We first compute the generating function of $\mathcal{M}(m;\beta\chi)$:
\begin{align*}
\sum_{m\geq 0}\mathcal{M}(m;\beta\chi)u^m&=\sum_{m\geq 0}\left(\sum_{f\in \mathcal{M}_m}\beta(f)\chi(f)\right)u^m\\
&=\sum_{f \textrm{ monic}}\beta(f)\chi(f)u^{\deg(f)}\\
&=\sum_{f \textrm{ monic}} \sum_{\substack{g|f,\\ g \textrm{ monic}}}\frac{1}{||g||}\ \mu\left(g\right)\chi(f) u^{\deg(f)}&(\textrm{lemma \ref{le:prphff}.2})\\
&=\left(\sum_{g \textrm{ monic}}\chi(g)\mu(g)\left(\frac{u}{q}\right)^{\deg(g)}\right)\left(\sum_{h \textrm{ monic}}\chi(h)u^{\deg(h)}\right) & (h=\frac{f}{g})\\
&=\frac{\mathcal{L}(u,\chi)}{\mathcal{L}(\frac{u}{q},\chi)}.
\end{align*}
We now insert $
\mathcal{L}(u,\chi)=(1-u)\det(I_N-uq^{\frac{1}{2}}\Theta_\chi)$ and note that
$$\frac{1}{\det(I_N-x\Theta_\chi)}=\sum_{l\geq 0}S_l(\chi)\ x^l.$$
Then
\begin{align*}
\sum_{m\geq 0}&\mathcal{M}(m;\beta\chi)u^m=\frac{\mathcal{L}(u,\chi)}{\mathcal{L}(\frac{u}{q},\chi)}=\frac{(1-u)\det(I_N-uq^{\frac{1}{2}}\Theta_\chi)}{(1-\frac{u}{q})\det(I_N-uq^{-\frac{1}{2}}\Theta_\chi)}\\
&=(1-u)\left(\sum_{j=0}^N\lambda_j(\chi)q^{\frac{j}{2}}u^j\right)\left(\sum_{k\geq0} q^{-k}u^k\right)\left(\sum_{l\geq 0}S_l(\chi)\ q^{-\frac{l}{2}}u^l\right)\\
&=\sum_{m\geq 0}\left(\sum_{\substack{j+k+l=m\\ 0\leq j \leq N\\ k,l \geq 0}}\lambda_j(\chi)S_l(\chi)q^{\frac{j}{2}-k-\frac{l}{2}}-\sum_{\substack{j+k+l=m-1\\ 0\leq j \leq N\\ k,l \geq 0}}\lambda_j(\chi)S_l(\chi)q^{\frac{j}{2}-k-\frac{l}{2}}\right)u^{m}.
\end{align*}
\end{proof}
\begin{mylem}\label{le:essumM}
Let $\chi$ be a primitive even character$\modu T^{n-h}$. Then
$$\sum_{m=0}^n\beta(T^{n-m})\mathcal{M}(m;\beta\chi)=\sum_{\substack{j+k+l=n\\ 0\leq j \leq N\\ k,l \geq 0}}\lambda_j(\chi)S_l(\chi)q^{\frac{j}{2}-k-\frac{l}{2}}-\sum_{\substack{j+k+l=n-1\\ 0\leq j \leq N\\ k,l \geq 0}}\lambda_j(\chi)S_l(\chi)q^{\frac{j}{2}-k-\frac{l}{2}-1}.$$
\end{mylem}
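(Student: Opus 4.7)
The strategy is to combine the explicit formula for $\mathcal{M}(m;\beta\chi)$ proved in Lemma \ref{le:esMn} with a direct evaluation of $\beta(T^{n-m})$, and then exploit the telescoping structure hidden in the right-hand side of Lemma \ref{le:esMn}.

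First I would compute $\beta(T^{n-m})$. Since $T^{n-m}$ is a prime power, Lemma \ref{le:prphff}.3 gives $\varphi(T^{n-m})=q^{n-m}(1-1/q)$ for $n-m\geq 1$, and obviously $\varphi(1)=1$. Hence $\beta(T^{n-m})=1-q^{-1}$ when $m<n$ and $\beta(1)=1$ when $m=n$. This means the sum decomposes as
\[
\sum_{m=0}^n\beta(T^{n-m})\mathcal{M}(m;\beta\chi)=\mathcal{M}(n;\beta\chi)+(1-q^{-1})\sum_{m=0}^{n-1}\mathcal{M}(m;\beta\chi).
\]

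Next I would introduce the shorthand
\[
A_m:=\sum_{\substack{j+k+l=m\\ 0\leq j\leq N\\ k,l\geq 0}}\lambda_j(\chi)S_l(\chi)q^{\frac{j}{2}-k-\frac{l}{2}},
\]
so that Lemma \ref{le:esMn} reads $\mathcal{M}(m;\beta\chi)=A_m-A_{m-1}$ (with the convention $A_{-1}=0$). The sum $\sum_{m=0}^{n-1}\mathcal{M}(m;\beta\chi)$ then telescopes to $A_{n-1}$, and $\mathcal{M}(n;\beta\chi)=A_n-A_{n-1}$. Collecting terms,
\[
\sum_{m=0}^n\beta(T^{n-m})\mathcal{M}(m;\beta\chi)=A_n-A_{n-1}+(1-q^{-1})A_{n-1}=A_n-q^{-1}A_{n-1},
\]
which, upon unpacking the shorthand, is precisely the claimed formula.

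There is no genuine obstacle here: the lemma is essentially a bookkeeping consequence of Lemma \ref{le:esMn} together with the particularly simple shape of $\varphi$ on prime powers. The only place where one has to be a little careful is the boundary case $m=n$, where $\beta(1)=1$ rather than $1-q^{-1}$; it is exactly this mismatch that leaves the term $\mathcal{M}(n;\beta\chi)$ not collapsed by a factor $(1-q^{-1})$ and accounts for the factor $q^{-1}$ (rather than $1$) in front of $A_{n-1}$ in the final expression. Once this is handled, the rest is mechanical.
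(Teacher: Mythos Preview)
Your proof is correct and follows essentially the same approach as the paper: compute $\beta(T^{n-m})$, split off the $m=n$ term, and telescope the remaining sum using Lemma \ref{le:esMn}. Your introduction of the shorthand $A_m$ makes the telescoping step slightly more explicit than the paper's presentation, but the argument is identical in substance.
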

\begin{proof}
First note that applying Lemma \ref{le:esMn} several times, we have
$$\sum_{m=0}^{n-1}\mathcal{M}(m;\beta\chi)=\sum_{\substack{j+k+l=n-1\\ 0\leq j \leq N\\ k,l \geq 0}}\lambda_j(\chi)S_l(\chi)q^{\frac{j}{2}-k-\frac{l}{2}}.$$
Since $$\beta(T^{n-m})=\left\{ \begin{array}{l l} 1-\frac{1}{q} & \textrm{if $0\leq m \leq n-1$,}\\
1& \textrm{if $m=n$,}
\end{array}\right.$$
we find that
$$
\sum_{m=0}^n\beta(T^{n-m})\mathcal{M}(m;\beta\chi)=\left(1-\frac{1}{q}\right)\sum_{m=0}^{n-1}\mathcal{M}(m;\beta\chi)+\mathcal{M}(n;\beta\chi).$$
The result follows.
\end{proof}
Note that in the proof of Lemma \ref{le:esMn} and Lemma \ref{le:essumM}, we only used that $\lambda_j(\chi)$ and $S_l(\chi)$ were the coefficients of $q^{\frac{j}{2}}u^j$, $q^{\frac{l}{2}}u^l$ in the respective expressions of $\frac{\mathcal{L}(u,\chi)}{1-u}$ and $\frac{1-u}{\mathcal{L}(u,\chi)}$. Hence if we define for a non-primitive even character $\chi$ the coefficients of $q^{\frac{j}{2}}u^j$, $q^{\frac{l}{2}}u^l$ in the respective expressions of $\frac{\mathcal{L}(u,\chi)}{1-u}$ and $\frac{1-u}{\mathcal{L}(u,\chi)}$ to be $\lambda_j'(\chi)$ and $S_l'(\chi)$, then we find
\begin{mylem}\label{le:essumM2}
Let $\chi$ be a non-primitive even character$\modu T^{n-h}$. Then
$$\sum_{m=0}^n\beta(T^{n-m})\mathcal{M}(m;\beta\chi)=\sum_{\substack{j+k+l=n\\ 0\leq j \leq N\\ k,l \geq 0}}\lambda_j'(\chi)S_l'(\chi)q^{\frac{j}{2}-k-\frac{l}{2}}-\sum_{\substack{j+k+l=n-1\\ 0\leq j \leq N\\ k,l \geq 0}}\lambda_j'(\chi)S_l'(\chi)q^{\frac{j}{2}-k-\frac{l}{2}-1}.$$
\end{mylem}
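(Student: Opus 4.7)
The plan is to mimic the proofs of Lemmas \ref{le:esMn} and \ref{le:essumM} verbatim, with the spectrally defined $\lambda_j(\chi)$ and $S_l(\chi)$ replaced throughout by the formally defined $\lambda_j'(\chi)$ and $S_l'(\chi)$. As the author notes in the remark directly preceding the statement, neither of those two lemmas invoked the Riemann Hypothesis or the unitarity of the Frobenius matrix $\Theta_\chi$; every step was a purely formal manipulation of rational functions of $u$, with $\lambda_j(\chi)$ and $S_l(\chi)$ entering only as Taylor coefficients of $\mathcal{L}(u,\chi)/(1-u)$ and $(1-u)/\mathcal{L}(u,\chi)$ respectively. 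Since both expansions continue to make sense for a non-primitive $\chi$ (both are rational functions analytic at $u=0$ with value $1$ there), the same derivation should carry over unchanged.

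Concretely, I would first re-establish the generating-function identity
$$\sum_{m\geq 0}\mathcal{M}(m;\beta\chi)u^m \;=\; \frac{\mathcal{L}(u,\chi)}{\mathcal{L}(u/q,\chi)},$$
which arises from Lemma \ref{le:prphff}.2 (the Dirichlet convolution $\varphi = ||.||*\mu$) together with complete multiplicativity of $\chi$ and is manifestly insensitive to whether $\chi$ is primitive. I would then split numerator and denominator as $(1-u)\cdot[\mathcal{L}(u,\chi)/(1-u)]$ and $(1-u/q)\cdot[\mathcal{L}(u/q,\chi)/(1-u/q)]$, insert the primed Taylor expansions, expand the geometric factor $1/(1-u/q) = \sum_{k\geq 0}q^{-k}u^k$, and read off the coefficient of $u^m$. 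This is precisely the calculation behind Lemma \ref{le:esMn}, and produces its primed analogue.

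The last step copies the proof of Lemma \ref{le:essumM}: using Lemma \ref{le:prphff}.3 one has $\beta(T^{n-m}) = 1-1/q$ for $0\leq m \leq n-1$ and $\beta(1)=1$, so
$$\sum_{m=0}^{n}\beta(T^{n-m})\mathcal{M}(m;\beta\chi) \;=\; \bigl(1-\tfrac{1}{q}\bigr)\sum_{m=0}^{n-1}\mathcal{M}(m;\beta\chi) \;+\; \mathcal{M}(n;\beta\chi).$$
The two-term alternating structure of the primed Lemma \ref{le:esMn} telescopes $\sum_{m=0}^{n-1}\mathcal{M}(m;\beta\chi)$ down to a single $j+k+l=n-1$ sum, and a short simplification, identical to the primitive case, delivers the stated identity.

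I do not foresee any substantive obstacle; the entire argument is formal repackaging of the primitive case. The one minor sanity check I would want to perform is that the sum range $0\leq j \leq N$ in the statement is genuinely harmless for non-primitive $\chi$: since the rational function $\mathcal{L}(u,\chi)/(1-u)$ may have effective degree strictly less than $N$ (a non-primitive even $\chi$ mod $T^{n-h}$ is induced from a primitive even character of smaller conductor), some of the $\lambda_j'(\chi)$ are forced to vanish. But this simply makes some of the terms in the formula zero, without affecting the derivation.
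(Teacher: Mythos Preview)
Your proposal is correct and matches the paper's own proof, which simply states that the argument is exactly analogous to that of Lemmas \ref{le:esMn} and \ref{le:essumM}. Your added sanity check on the range $0\leq j\leq N$ and your explicit unpacking of the generating-function computation are welcome elaborations, but the underlying approach is identical.
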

\begin{proof} The proof of this is exactly analogous to that of Lemma \ref{le:esMn} and Lemma \ref{le:essumM}.
\end{proof}
We now substitute these two lemmas into equation (\ref{var}) with $\alpha=\beta$ to prove Theorem \ref{th:TH1}.

\begin{proof}[Proof of Theorem \ref{th:TH1}]
We split the sum in equation (\ref{var}) into a sum over primitive and non-primitive even characters. There are $O\left(\frac{\varphi_{\mathrm{ev}}(T^{n-h})}{q}\right)=O(q^{n-h-2})$ non-primitive even characters modulo $T^{n-h}$, see \cite[\S 3.3]{Kea14}. The largest power of $q$ in the first sum of Lemma \ref{le:essumM2} is given by $\frac{n-2h-4}{2}$. It is only attained when $j=N=n-h-2, k=0, l=n-N=h+2$. In the second sum of Lemma \ref{le:essumM2} the largest power of $q$ is given by $\frac{n-2h-5}{2}$, which is attained when $j=N, k=0, l=n-N-1$. It follows that the sum over all non-primitive characters is estimated by $O\left(q^{-2(n-h-1)}\cdot q^{n-h-2}\cdot q^{n-2h-4}\right)=O\left(q^{-h-4}\right)$. Applying the same arguments for the largest $q$-powers in lemma \ref{le:essumM}, we find
\begin{align*}
\mathrm{Var}( \mathcal{N}_\beta)&=\frac{1}{\varphi^*_{\mathrm{ev}}(T^{n-h})^2}\sum_{\substack{\chi \modu{T^{n-h}}\\ \chi\neq \chi_0 \ \textrm{even, primitive}}}\left|\sum_{m=0}^n\alpha(T^{n-m})\mathcal{M}(m;\alpha\chi)\right|^2+O\left(q^{-h-4}\right)\\
&=\frac{1}{\varphi^*_{\mathrm{ev}}(T^{n-h})^2}\left(\sum_{\substack{\chi \modu{T^{n-h}}\\ \chi\neq \chi_0 \ \textrm{even, primitive}}} |\lambda_N(\chi)|^2 |S_{h+2}(\chi)|^2\right) q^{n-2h-4} + O\left(q^{-h-\frac{7}{2}}\right)\\
&=\frac{\sum_\chi^* |\lambda_N(\chi)|^2 |S_{h+2}(\chi)|^2}{\varphi^*_{\mathrm{ev}}(T^{n-h})} q^{-h-3} + O\left(q^{-h-\frac{7}{2}}\right).
\end{align*}
Note that $|\lambda_N(\chi)|^2=1$ for all primitive $\chi$. Katz's equidistribution theorem for primitive even characters modulo $T^{m}$, \cite{Kat13}, states that, if $m\geq 5$, the Frobenius matrices of these characters become equidistributed in $PU(m-2)$ in the limit $q\rightarrow \infty$. This theorem enables us to replace the average over primitive even characters modulo $T^{n-h}$ by a matrix integral over $PU(n-h-2)$.
Finally note that we can replace the matrix integral over the projective group $PU(n-h-2)$ by an integral over the unitary group $U(n-h-2)$, since the function we average over is invariant under scalar multiplication. Since the symmetric $m$-th power is an irreducible representation for any $m$, see for example \cite[Lecture 6]{Ful91}, we conclude that
$$\int_{U(n-h-2)}\left|\tr \ \sym^{h+2}(U)\right|^2dU=1.$$ Hence, as $q\rightarrow \infty$, 
$$\mathrm{Var}( \mathcal{N}_{\beta})\sim q^{-h-3}.$$
\end{proof}

\end{document}